\tikzset{
	dot/.style = {circle, fill, minimum size=#1,
		inner sep=0pt, outer sep=0pt},
	dot/.default = 6pt 
}
\newtheorem{theorem}{Theorem}
\newtheorem{example}{Example}
\newtheorem{remark}{Remark}
\newtheorem{definition}{Definition}
\newtheorem{assume}{Assumption}
\newtheorem{axiom}{Axiom}
\newtheorem{problem}{Problem}
\newtheorem{proposition}{Proposition}
\newtheorem{corollary}{Corollary}
\definecolor{myorange}{RGB}{255, 180, 30}
\numberwithin{equation}{section}
\begin{document}

\title{Symplectic geometry in hybrid and impulsive optimal control}

\author{%
	William Clark\affil{1,}\corrauth and
	Maria Oprea\affil{2}
}

\shortauthors{the Author(s)}

\address{%
	\addr{\affilnum{1}}{Department of Mathematics, Ohio University, Athens, OH 45701, USA}
	\addr{\affilnum{2}}{Center for Applied Mathematics, Cornell University, Ithaca, NY 14853, USA}
}
\corraddr{clarkw3@ohio.edu.
}

\begin{abstract}
	Hybrid dynamical systems are systems which undergo both continuous and discrete transitions. The Bolza problem from optimal control theory was applied to these systems and a hybrid version  of Pontryagin's maximum principle was presented. This hybrid maximum principle was presented to emphasize its geometric nature which made its study amenable to the tools of geometric mechanics and symplectic geometry. One explicit benefit of this geometric approach was that the symplectic structure (and hence the induced volume) was preserved. This allowed for a hybrid analog of caustics and conjugate points. Additionally, an introductory analysis of singular solutions (beating and Zeno) was discussed geometrically. This work concluded on a biological example where beating can occur.
\end{abstract}

\keywords{hybrid systems; optimal control; symplectic geometry
	\newline
	\textbf{Mathematics Subject Classification:} 49N25, 34A38, 37J39}

\maketitle

\section{Introduction}

Solving an optimal control problem amounts to finding the ``best'' path that connects an initial location $A$ to a terminal location $B$. Analytically this is described by finding the minimizer of a functional, $J[\gamma]$, where $\gamma$ is a continuous path connecting the points $A$ and $B$. An alternative approach to this problem is to consider the optimal path geometrically; the best path can be interpreted as the ``straight path'' connecting the two points. This interpretation is inspired by classical mechanics where trajectories follow the principle of least action (i.e., best paths are minimizers of the action functional) and the optimal path follows the equations of motion described by the famous Euler-Lagrange equations (which are precisely geodesic equations for natural systems with no potential energy). The goal of this work is to offer an in-depth analysis of this analogy for hybrid dynamical systems - systems whose trajectories are subject to both continuous-time and discrete-time update laws. More particularly, we are searching for the minimizer of $J[\gamma]$ where $\gamma$ is now allowed to undergo discrete transitions, and characterizing the optimal discontinuous path as a  ``straight lines'' in the hybrid framework. Hybrid dynamical systems are used to describe a wide variety of disciplines from robotics, to biological and medical systems \cite{review_hybrid}. The problem of optimal control of hybrid dynamical systems has been studied extensively in recent years. One line of study aims to express a hybrid system  in a form that allows for the application of classical optimal control techniques such as the Pontryagin maximum principle (PMP) \cite{azhmyakovNec} and dynamic programming \cite{pakniyat2014minimum} from the continuous setting. For instance, \cite{westenbroekcdc} smooths out the impact surface using tools perturbation theory and analyzes the hybrid control problem as a limiting case of the obtained continuous one. Alternatively, \cite{yunt_mechanical_impact} uses measure differential inclusions to treat the hybrid dynamical system as an ordinary differential equation with a set valued map, and \cite{azhmyakovMax} interprets trajectories as measures, with impacts corresponding to Dirac deltas at the contact point. Solutions for the particular case where the cost is simply the time to origin, and the number of impacts is fixed have been proposed in \cite{cristofarocdc}. The other line of study tackles the hybrid system directly by extending the maximum principle and dynamic programming to account for discontinuities \cite{pakniyat2014minimum,pakniyat2015minimum, pekarek2008variational, liberzon_oc}. Our goal is to analyze this setting from a geometric point of view. More formally, the general problem this work considers will be to find solutions to the following minimization problem:
\begin{equation*}
	\begin{gathered}
		u^* = \arg\min_{u(\cdot)}\, J[x_0, u(\cdot)], \\ 
		J[x_0, u(\cdot)] := \int_0^{t_f}\, \ell\left(x(\tau),u(\tau)\right)\, d\tau + g\left( x(t_f)\right),
	\end{gathered}
\end{equation*}
where $x$ is subject to the controlled hybrid dynamics:
\begin{equation}\label{eq:controlled_HDS}
	\begin{cases}
		\dot{x} = f(x,u), & x\not\in S, \\
		x^+ = \Delta(x^-), & x\in S,
	\end{cases}
\end{equation}
where $x\in M$ is the ambient space and $S\subset M$ is the set where the discrete transitions occur. 

The hybrid maximum principle \cite{pakniyat2015minimum} states that the usual maximum principle applies during continuous transitions while a ``Hamiltonian jump condition'' will apply at the moment of a discrete transition. As an elementary example of this procedure, suppose we wish to find the shortest path connecting two points in $\mathbb{R}^2$, $A = (x_1,y_1)$ and $B=(x_2,y_2)$ with $y_i<0$. The shortest path is clearly the straight line connecting the two points. Suppose now that the path is \textit{required} to touch the line $\sigma = \{y=0\}$ and does so at the undetermined point $C = (z,0)$ as depicted in Figure \ref{fig:intro_bounce}. This constrained shortest path will be the concatenation of the straight line connecting points $A$ and $C$ with the line connecting points $C$ and $B$. Its length is (as a function of $z$):
\begin{equation*}
	D(z) = \sqrt{ (x_1-z)^2 + y_1^2} + \sqrt{(x_2-z)^2+y_2^2}.
\end{equation*}
The value of $z$ that minimizes this function is
\begin{equation*}
	z^* = \frac{x_1y_2+x_2y_1}{y_1+y_2}.
\end{equation*}
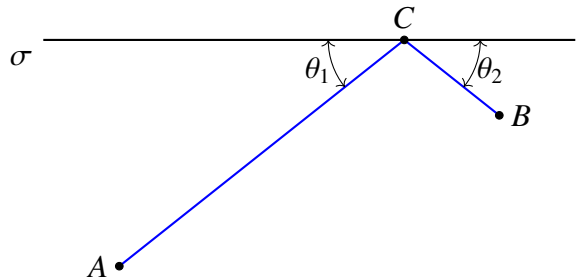
\begin{figure}
	\centering
	\begin{tikzpicture}
		\draw[black,thick] (-3,0) -- (4,0);
		\node[below left] at (-3,0) {$\sigma$};
		\draw[blue,thick] (-2,-3) -- (1.75,0);
		\draw[blue,thick] (1.75,0) -- (3,-1);
		\node[left] at (-2,-3) {$A$};
		\node[above] at (1.75,0) {$C$};
		\node[right] at (3,-1) {$B$};
		\draw[fill] (-2,-3) circle [radius=0.05];
		\draw[fill] (1.75,0) circle [radius=0.05];
		\draw[fill] (3,-1) circle [radius=0.05];
		\path  (-2,-3) coordinate (a)
		-- (1.75,0) coordinate (b)
		-- (-3,0) coordinate (c)
		pic["$\theta_1$", draw=black, <->, angle eccentricity=1.2, angle radius=1cm] {angle=c--b--a};
		\path  (3,-1) coordinate (A)
		-- (1.75,0) coordinate (B)
		-- (4,0) coordinate (C)
		pic["$\theta_2$", draw=black, <->, angle eccentricity=1.2, angle radius=1cm] {angle=A--B--C};
	\end{tikzpicture}
	\caption{The shortest path connecting points $A$ and $B$ while also touching the line $\sigma$.}
	\label{fig:intro_bounce}
\end{figure}
It is important to note that the two angles, $\theta_1$ and $\theta_2$, are equal. Indeed,
\begin{equation*}
	\tan\theta_1 = \frac{-y_1}{z^*-x_1} = \frac{y_1+y_2}{x_1-x_2} = \frac{-y_2}{x_2-z^*} = \tan\theta_2.
\end{equation*}
This means that the shortest path has its angle of incidence equal to its angle of reflection, i.e., the shortest path obeys specular reflection. 

The general hybrid maximum principle is a generalization of this example. Between points $A$ and $C$ and between $C$ and $B$, the optimal path is a straight line which corresponds to a Hamiltonian trajectory. At the point $C$, the path undergoes specular reflection, which is the Hamiltonian jump condition.

The contribution of this work is to provide a geometric interpretation of solutions to the optimal control problem of hybrid systems through the interpretation of the PMP \cite{pontryagin} in a differential geometric/symplectic viewpoint \cite{linan_geometric}. First, we give a geometric proof of the Hybrid PMP (HPMP) through the augmented differential (see Definition \ref{def:aug_dif})  and derive sufficient conditions for existence and uniqueness of solutions. Second, we show that the Hamiltonian jump condition is a symplectomorphism and, as a consequence, the hybrid evolution of the adjoint equation is symplectic (and, hence, volume-preserving). Third, we extend the ideas of \cite{lagrangian_manifolds} to the hybrid case so as to give a global characterization of solutions as intersections of Lagrangian submanifolds. 
This is particularly useful since it allows us to classify points where the resulting boundary value problem becomes singular as points where the Lagrangian submanifolds undergo folding.

Lastly, we examine cases in which the HPMP described in Theorem \ref{thm:HPMP} does not apply. As opposed to \cite{pakniyat2015minimum} where the cases of codimension 2 or higher impacts are considered, here we turn to the the case in which the hybrid trajectory is not regular, i.e., the trajectory is beating (two or more impacts may happen simultaneously) or Zeno (infinitely many impacts in a finite amount of time). For the beating case, we extend the corner conditions to determine the co-state jumps on the beating sets.  Although a result from \cite{clark2023invariant} states that Zeno trajectories almost-never occur in hybrid systems which possess an invariant volume-form, we show that the optimal trajectory might indeed benefit from the Zeno effect. 

This paper is organized as follows: the definition for both hybrid and impact systems along with their solutions are presented in Section \ref{sec:hybrid}. The optimal control problem is described in Section \ref{sec:problem}, and necessary conditions for optimality are presented in Section \ref{sec:necessary}. Theorem \ref{thm:HPMP} is the main theorem of this section and it presents the HPMP for regular arcs. The definition of  hybrid Lagrangian submanifolds  is presented in Section \ref{sec:hybrid_Lagrange}, and Subsections \ref{subsec:intersect} and \ref{subsec:caustics} analyze intersections and caustics of hybrid Lagrangian submanifolds. Section \ref{sec:Zeno} is split into two parts: Subsection \ref{subsec:beating} studies the beating phenomena, and Subsection \ref{subsec:Zeno} is dedicated to Zeno executions. A biological application with numerical simulations is presented in Section \ref{sec:example}. This paper concludes with Section \ref{sec:conclusions}.

Summation notation will be used throughout; an upper and lower index implies a sum, $a_ib^i=\sum\,a_ib^i$. Additionally, throughout this work, most of the objects considered will be smooth, which is taken to mean $C^\infty$. 
\section{Hybrid and impact dynamical systems}
\label{sec:hybrid}
Hybrid dynamical systems are used to study and model a wide range of phenomenon. As such, a  variety of different definitions exist in the literature. This section introduces our definition of a hybrid system as well as the important special case of impact systems. 
\subsection{Hybrid systems}
As there exists a plethora of definitions for a hybrid dynamical system, the specific class discussed in this paper will follow the definition below.
\begin{definition}[Hybrid Dynamical Systems]\label{def:HDS}
	A hybrid dynamical system, abbreviated HDS, is a 4-tuple $\mathcal{H} = (M,S,X,\Delta)$ where
	\begin{enumerate}
		\item[1.] $M$ is a smooth (finite-dimensional, $\dim(M)=n$) manifold,
		\item[2.] $S\subset M$ is a smooth embedded submanifold with co-dimension 1, called the \textit{guard},
		\item[3.] $X:M\to TM$ is a smooth vector field, and
		\item[4.] $\Delta:S\to M$ is a smooth map, called the reset, such that $\Delta(S)$ is also a smooth embedded submanifold and there exists a continuous extension to its closure, $\overline{\Delta}:\overline{{S}}\to M$.
	\end{enumerate}
	The manifold $M$ will be referred to as the state-space, $S$ as the guard, and $\Delta$ as the reset. Moreover, a regular HDS (rHDS) additionally satisfies:
	\begin{enumerate}
		\item[5.] $S\cap\Delta(S)=\emptyset$ and $\overline{S}\cap\overline{\Delta(S)}\subset M$ have co-dimension at least 2.
	\end{enumerate}
\end{definition}
The importance of $\Delta(S)$ being a submanifold is that it allows for a more graceful study of differential forms across resets. 
The guard having codimenion 1 guarantees that resets are not too rare, as shown in Figure \ref{fig:codim2}, and are needed for the quasi-smooth dependence property; see Definition \ref{def:qsdp} below. Condition (5) is a regularity assumption and is needed to prove Theorem \ref{thm:Zeno} below. The set $S$ as defined above does not depend on time - this can be overcome by appending time as an extra dimension.
\begin{figure}
	\centering
	\begin{tikzpicture}
		\foreach \x in {0, 0.7,..., 7}{
			\foreach \y in {0, 0.7,..., 7}{
				\draw[->] (\x,\y) -- (\x+0.5,\y);
			}
		}
		\draw[thick, green] (0,5.2) -- (3,5.2);
		\draw[thick, green] (4,1.7) -- (7.5,1.7);
		\draw[thick, magenta] (0,5.4) -- (7.5,5.4);
		\draw[thick, magenta] (0,5) -- (7.5,5);
		\draw[blue, fill] (3,5.2) circle [radius=0.1];
		\draw[blue, fill] (4, 1.7) circle [radius=0.1];
		\draw[cyan, thick, ->] (3,5.2) -- (4, 1.7);
		\node[blue, fill=white] at (3,5.75) {$S$};
		\node[cyan, fill=white] at (3.85,3.5) {$\Delta$};
	\end{tikzpicture}
	\caption{When the $\mathrm{codim}(S)\geq 2$, resets become rare. In particular, generic trajectories will ``miss'' the guard. In this example, the guard is a single point in the plane and only a single arc resets - all others miss.}
	\label{fig:codim2}
\end{figure}
\begin{remark}
	A common definition of HDSs utilizes multiple state-spaces by incorporating a directed graph structure, cf., e.g., \cite{simic2001structural}. However, this version can still be encapsulated by Definition \ref{def:HDS} as $M$ is not assumed to be connected.
\end{remark}
The dynamics of $\mathcal{H}$ can be informally described as
\begin{equation}\label{eq:hybrid_dynamics}
	\begin{cases}
		\dot{x} = X(x), & x\not\in S, \\
		x^+ = \Delta(x^-), & x\in S,
	\end{cases}
\end{equation}
and its general solution will be described by its hybrid flow where completeness of its flow will be implicitly assumed. Throughout, it will be understood that a superscript of ``$+$'' indicated a limit from above and ``$-$'' as a limit from below.

\begin{definition}[Hybrid Flow]
	Let $\mathcal{H}=(M,S,X,\Delta)$ be an HDS. Let $\varphi:\mathcal{D}\to M$ be the flow for the continuous dynamics $\dot{x}=X(x)$ where $\mathcal{D}\subset \mathbb{R}\times M$ is the domain. The hybrid flow for the HDS \eqref{eq:hybrid_dynamics} will be denoted by $\varphi^\mathcal{H}:\mathcal{D}^\mathcal{H}\subset\mathcal{D}\to M$ and satisfies the following: 
	\begin{itemize}
		\item If for all $s\in[0,t]$, $\varphi(s,x)\not\in S$, then $\varphi^\mathcal{H}(t,x) = \varphi(t,x)$ ( outside of the guard, the hybrid flow obeys the continuous equations of motion).
		\item Suppose that there exists a time $t^*\in (0,t)$ such that $\varphi(t^*,x)\in S$. Then,
		\begin{equation*}
			\lim_{s\searrow t^*} \, \varphi^\mathcal{H}(s,x) = \Delta\left(
			\lim_{s\nearrow t^*} \, \varphi^\mathcal{H}(s,x)\right),
		\end{equation*}
		provided that $\Delta(\varphi(t^*,x))\not\in S$ (on the guard, the hybrid flow jumps according to the reset map $\Delta$).
	\end{itemize}
	Alternatively, the hybrid flow may be denoted by $\varphi_t^\mathcal{H}(x) = \varphi^\mathcal{H}(t,x)$ to emphasise the dependence on the initial condition.
\end{definition}
The above notion of a hybrid flow is limited to trajectories with a finite number of separated resets; for more on the solution concept of HDSs, see e.g. \cite{linan_controllability, teelSurvey, sergey2006impulsive}. The flow of a hybrid system is capable of pathological behavior which does not occur in continuous-time systems, namely, beating, blocking, and Zeno which will be discussed in Section \ref{sec:Zeno}. Trajectories that are free of these behaviors are called \textit{regular} (compare to regular HDSs).
\begin{definition}[Regular Hybrid Arc]
	A hybrid arc, $\gamma:[t_0,t_f]\to M$ where $\gamma(t) = \varphi^\mathcal{H}(t,x)$ for some $x\in M$, is called \textit{regular} if there exists a finite number of time instants where resets occur,
	\begin{equation*}
		\# \left\{ t\in [t_0,t_f] : \lim_{s\nearrow t} \varphi^\mathcal{H}(s,x)\in {S}\right\} < \infty,
	\end{equation*}
	and resets move the state away from the guard,
	\begin{equation*}
		\lim_{s\nearrow t} \gamma(s) \in {S} \implies \Delta\left( \lim_{s\nearrow t} \gamma(s) \right) \not\in {S}.
	\end{equation*}
\end{definition}
Even for the case of regular hybrid arcs, the trajectories will generally fail to be continuous at resets. This makes continuous (and smooth) dependence on initial conditions impossible. Fortunately, a weaker notion can be satisfied, the quasi-smooth dependence property depicted in Figure \ref{fig:QSDP}.
\begin{definition}[Quasi-Smooth Dependence Property]\label{def:qsdp}
	Consider an HDS, $\mathcal{H}$, with flow $\varphi^\mathcal{H}$. Then, $\mathcal{H}$ has the quasi-smooth dependence property if for every $x\in M\setminus {S}$ and $t\in \mathbb{R}$ such that $\varphi^\mathcal{H}(t,x)\not\in{S}$, there exists an open neighborhood $x\in U$ such that $U\cap {S}=\emptyset$ and the map $\varphi^\mathcal{H}(t,\cdot):U\to M$ is smooth. If $\mathcal{H}$ has the quasi-smooth dependence property, it will be referred to as smooth.
\end{definition}
\begin{figure}
	\centering
	\begin{tikzpicture}
		\draw[blue, thick] (0,0) to [out=65,in=-90] (1.5,3) to [out=90,in=-150] (3,5);
		\node[blue, above right] at (3,5) {${S}$};
		\draw[red, thick] (2.5,-0.5) to [out=100, in=-100] (4,2) to [out=80, in=180] (5.25,3.5);
		\node[red, right] at (5.25,3.5) {$\Delta({S})$};
		\draw[thick] (-1,3.25) to [out=10,in=200] (1.5,3);
		\draw[fill] (-1,3.25) circle [radius=0.04];
		\node[left] at (-1,3.25) {$x$};
		\draw[thick] (4,2) to [out=-30, in=110] (6,1);
		\draw[fill] (6,1) circle [radius=0.04];
		\node[below right] at (6,1) {$\varphi^\mathcal{H}(t,x)$};
		\draw[thick, dashed, ->] (1.5,3) -- (4,2);
		\node[above] at (2.75, 2.5) {$\Delta$};
		\draw[thick, dashed] (-1,3.25) circle [radius=0.75];
		\node at (-1.75, 4) {$U$};
		\path[draw, use Hobby shortcut, closed=true, dashed]
		(5.5,1) .. (6,1.7) .. (6.5,1.3) .. (7.75,1) .. (6,0.2);
		\node[above right] at (6.5,1.4) {$\varphi^\mathcal{H}(t, U)$};
	\end{tikzpicture}
	\caption{A hybrid system with the quasi-smooth dependence property. Although the flow fails to be smooth along the guard, the restriction to the open neighborhood $U$ is smooth when both $U$ and $\varphi^\mathcal{H}(t,U)$ are disjoint from $\mathcal{S}$.}
	\label{fig:QSDP}
\end{figure}
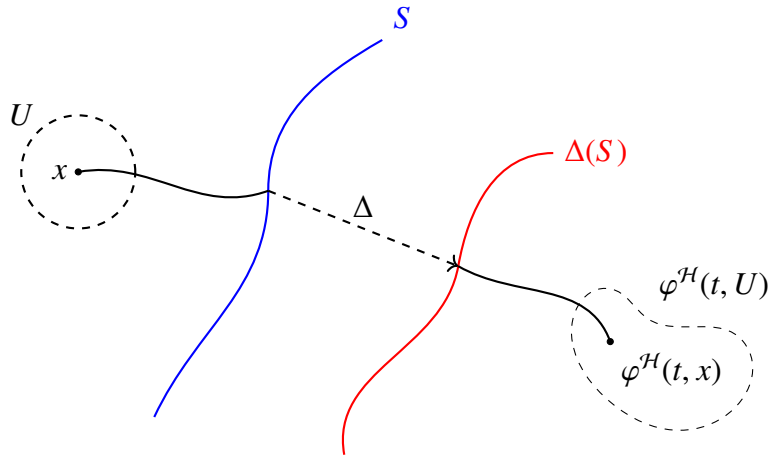
It turns out that many HDSs will have the quasi-smooth dependence property as it is related to transversality, cf. Theorem 1 in \cite{clark2023invariant}. We make the following assumption.
\begin{assume}\label{as:regular}
	Unless otherwise stated, all HDSs will have the quasi-smooth dependence property and all arcs will be regular.
\end{assume}

An analysis of the phenomena of beating and Zeno, where Assumption \ref{as:regular} is not satisfied, is reserved to Section \ref{sec:Zeno}.

A useful feature of the quasi-smooth dependence property is that it allows for differential forms to be preserved across resets in a meaningful way. For a continuous-time system, $\dot{x}=X(x)$, a differential form $\alpha$ is invariant if, and only if, its Lie derivative vanishes, $\mathcal{L}_X\alpha=0$. Likewise, for a discrete-time dynamical system, $x_{n+1}=f(x_n)$, a differential form is invariant if, and only if, $f^*\alpha=\alpha$. The conditions for a differential form to be invariant for a hybrid system are a mixture of the above conditions and are given below.
\begin{theorem}[\cite{clark2023invariant}]\label{thm:inv_form}
	A differential form $\alpha\in\Omega^*(M)$ is preserved along the hybrid flow if, and only if, $\mathcal{L}_X\alpha=0$ and
	\begin{equation}\label{eq:invariance}
		\begin{array}{cl}
			\Delta^*\alpha = \iota^*\alpha, & (\text{specular condition}) \\
			\Delta^*i_X\alpha = \iota^*i_X\alpha, & (\text{energy condition})
		\end{array}
	\end{equation}
	where $\iota:{S}\hookrightarrow M$ is the inclusion and $i_X\alpha = \alpha(X,\cdot)$ is interior multiplication. Let $\mathcal{A}_\mathcal{H}\subset\Omega^*(M)$ be the set of all invariant differential forms. Then $\mathcal{A}_\mathcal{H}$ forms a $\wedge$-subalgebra that is closed under $d$ and $i_X$.
\end{theorem}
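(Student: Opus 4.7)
The plan is to derive the jump conditions by expressing the hybrid flow in flow-box charts adapted to $X$ on either side of the reset, and then matching coefficients of a canonical splitting of $\alpha$. First, note that $\mathcal{L}_X\alpha=0$ is forced by considering trajectories that never hit $S$, for which $\varphi_t^{\mathcal{H}}=\varphi_t$ and the classical continuous case applies. Assuming this, near a point $p\in S$ I would invoke transversality of $X$ to $S$ (built into Assumption \ref{as:regular}) to construct flow-box coordinates $(z,s)$ with $z\in S$, $\partial_s=X$, and the incoming region corresponding to $s\le 0$. A symmetric chart $(\zeta,\sigma)$ is built around $\Delta(p)\in\Delta(S)$ with $\partial_\sigma=X$. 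Because $\mathcal{L}_X\alpha=0$, the form $\alpha$ is $s$-independent (resp.\ $\sigma$-independent) in each chart, and the standard decomposition $\alpha=\alpha^{(0)}+ds\wedge\alpha^{(1)}$ (with $\alpha^{(0)},\alpha^{(1)}$ free of $ds$) takes the form
\[
\alpha \;=\; \iota^*\alpha \;+\; ds\wedge \iota^* i_X\alpha
\]
on the incoming side, with the analogous formula on the outgoing side using the inclusion $\iota':\Delta(S)\hookrightarrow M$ and $d\sigma$. Here the pullbacks from $S$ (resp.\ $\Delta(S)$) are tacitly extended $s$- (resp.\ $\sigma$-) independently along the flow of $X$.

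The next step is to compute $\varphi_t^{\mathcal{H}}$ in these charts. An incoming point $(z,s)$ with $-t\le s\le 0$ reaches $S$ after time $-s$, resets to $\Delta(z)$, then flows forward for the remaining $t+s$, giving $\varphi_t^{\mathcal{H}}(z,s) = (\Delta(z),\,t+s)$. Consequently $(\varphi_t^{\mathcal{H}})^* d\sigma = ds$, and the $\zeta$-dependent pieces pull back via $\Delta$, so
\[
(\varphi_t^{\mathcal{H}})^*\alpha \;=\; \Delta^*\alpha \;+\; ds\wedge \Delta^* i_X\alpha.
\]
Setting this equal to the decomposition of $\alpha$ on the incoming side and using uniqueness of the split into $ds$-free and $ds$-linear components extracts the specular condition $\Delta^*\alpha=\iota^*\alpha$ from the coefficient of $1$ and the energy condition $\Delta^*i_X\alpha=\iota^*i_X\alpha$ from the coefficient of $ds$. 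The same calculation run in reverse yields sufficiency, so both directions of the equivalence fall out simultaneously.

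The algebraic claims about $\mathcal{A}_{\mathcal{H}}$ are then routine verifications: closure under $\wedge$ uses that $\mathcal{L}_X$ is a derivation, $\iota^*$ and $\Delta^*$ are ring homomorphisms, and $i_X$ is a graded derivation with matching signs on both sides of the energy condition; closure under $d$ uses $[\mathcal{L}_X,d]=0$, commutation of pullback with $d$, and Cartan's magic formula $\mathcal{L}_X = di_X + i_X d$ to convert the energy condition for $d\alpha$ into the specular condition for $i_X\alpha$; closure under $i_X$ is immediate from $i_X^2=0$ together with the already-established energy condition. I expect the main technical hurdle to be the flow-box formula itself: a naive ``mirror'' reflection $(z,s)\mapsto(\Delta(z),-s)$ produces a spurious minus sign in the energy condition, and one has to track the correct flow parameter $t+s$ with discipline for the sign to land as stated. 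Every other step then follows from straightforward bookkeeping enabled by the quasi-smooth dependence property.
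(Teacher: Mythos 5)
Your proposal is correct and takes essentially the approach this paper points to: the paper does not prove the theorem itself (it defers to \cite{clark2023invariant}) but notes the conditions arise from the augmented differential, and your flow-box argument --- matching the $ds$-free and $ds\wedge(\cdot)$ components of $(\varphi_t^{\mathcal{H}})^*\alpha=\alpha$ across a reset --- is exactly the splitting $T_xM=T_xS\oplus\mathrm{span}\{X(x)\}$ that the augmented differential $\Delta_*^X$ encodes, and your treatment of the algebra $\mathcal{A}_\mathcal{H}$ via derivation properties and Cartan's formula is the standard one. The only implicit ingredients worth flagging are transversality of $X$ to $S$ (needed for your incoming chart, and already required for $\Delta_*^X$ to be well defined under the paper's standing assumptions) and, for sufficiency, composing the local identity over the finitely many separated resets of a regular arc --- both routine in this setting, so there is no gap.
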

\noindent The two conditions in \eqref{eq:invariance} are found via the \textit{augmented differential} which is the way to differentiate the reset map that is compatible with the vector field. See also the notion of a saltation matrix, e.g., \cite{saltation}.
\begin{definition}[Augmented Differential]\label{def:aug_dif}
	For a smooth map, $\Delta:{S}\to M$, and a vector field, $X\in\mathfrak{X}(M)$, the augmented differential of $\Delta$ is the linear map $\Delta_*^X:TM|_S\to TM$, defined via
	\begin{equation*}
		\begin{cases}
			\Delta_*^X\cdot u = \Delta_*\cdot u, & u\in T_x{S}\subset T_xM, \\
			\Delta_*^X\cdot X(x) = X\left(\Delta(x)\right).
		\end{cases}
	\end{equation*}
\end{definition}
\noindent The augmented differential is uniquely determined as long as $X(x)\not\in T_x{S}$ for all $x\in S$. Moreover, this map is invertible as long as $\Delta$ is an immersion and $X(y)\not\in T_y\Delta(S)$ for all $y\in\Delta(S)$.

An important consequence of Theorem \ref{thm:inv_form} is that Hamiltonian hybrid dynamical systems (those generated by the hybrid maximum principle below) preserve the symplectic form as well as the induced volume-form (see Section \ref{sec:impact_system}). Finding invariant volumes is important as it imposes strong restrictions on Zeno behavior, as will be seen in Section \ref{sec:Zeno}. 
\subsection{Impact systems}\label{sec:impact_system}
A distinguished subclass of HDSs are impact systems and serve as the Hamiltonian analog which will be central to the hybrid maximum principle. These are mechanical systems where physical impacts occur, e.g., a billiard ball ricocheting off a cushion.
Their definition is stated for mechanical Hamiltonian systems but extends naturally to Lagrangian systems.
\begin{definition}
	An impact system is a 3-tuple $\mathcal{I}=(Q,H,\Sigma)$ where
	\begin{enumerate}
		\item $Q$ is a smooth manifold,
		\item $H:T^*Q\to\mathbb{R}$ is a smooth Hamiltonian, and
		\item $\Sigma\subset Q$ is a smooth, embedded, co-dimension 1 submanifold.
	\end{enumerate}
\end{definition}
Generally speaking, the set $\Sigma$ cannot be described as a regular level-set of a function $h:Q\to\mathbb{R}$; $\Sigma$ need not even be orientable. However, this can always be done locally and assuming that $\Sigma = h^{-1}(0)$ will not cause problems. As such, an impact system may be referred to by $(Q,H,h)$, as is used in \cite{Ames06isthere}.

An impact system induces an HDS via the following procedure: $(Q,H,\Sigma)$ becomes $(M,{S},X,\Delta)$ where: $M = T^*Q$, $X = X_H$ is the Hamiltonian vector field given by $i_{X_H}\omega = dH$, ${S}$ is the set of ``outward pointing momenta,'' and
\begin{equation}\label{eq:outward_momenta}
	{S} = \left\{ (x,p) \in T^*Q|_\Sigma : \left(\pi_Q^*dh\right)(X_H) > 0\right\} \subset T^*Q|_\Sigma,
\end{equation}
where $\pi_Q:T^*Q\to Q$ is the canonical cotangent projection. The reset $\Delta$ will follow from variational principles discussed below. Notice that the guard, ${S}$, depends on the Hamiltonian and may be denoted by $S_H$ if the choice of Hamiltonian is unclear.  
\begin{remark}
	By defining the guard as outward pointing momenta in \eqref{eq:outward_momenta}, we are implicitly choosing an orientation for $\Sigma$. If $\Sigma$ is not orientable, $S$ can be constructed on the orientable double cover.
\end{remark}
\begin{example}\label{ex:resets}
	Let $Q = \{x^2+y^2\leq 1\}\subset \mathbb{R}^2$ be the unit disk and $\Sigma = \mathbb{S}^1$ the unit circle. Choose
	\begin{equation*}
		H_1 = p_yx - p_xy, \quad H_2 = p_x^2+p_y^2.
	\end{equation*}
	The induced guards are
	\begin{equation*}
		{S}_{H_1} = \emptyset, \quad {S}_{H_2} = \left\{ (x,y;p_x,p_y): x^2+y^2=1, \, xp_x+yp_y > 0\right\}.
	\end{equation*}
	See Figure \ref{fig:induced_guards} for an illustration for $S_{H_1}$ and $S_{H_2}$.
	\begin{figure}
		\centering
		\begin{subfigure}{0.45\textwidth}
			\begin{tikzpicture}
				\draw[->, thick] (-3.5,0) -- (3.5,0);
				\draw[->, thick] (0,-3) -- (0,3);
				\draw[thick, blue] (0,0) circle [radius=2.5];
				\foreach \theta in {0, 20,..., 360}{
					\foreach \r in {0.5, 1, 1.5, 2, 2.5, 3}{
						\pgfmathsetmacro{\x}{\r*cos(\theta)}
						\pgfmathsetmacro{\y}{\r*sin(\theta)}
						\pgfmathsetmacro{\vx}{-\y}
						\pgfmathsetmacro{\vy}{\x}
						\draw[->, red] (\x,\y) -- (\x+\vx/5, \y+\vy/5);
					}
				}
				\node[blue, fill=white] at (1.5,-1.5) {$\Sigma$};
			\end{tikzpicture}
		\end{subfigure}
		\begin{subfigure}{0.45\textwidth}
			\begin{tikzpicture}
				\draw[->, thick] (-3.5,0) -- (3.5,0);
				\draw[->, thick] (0,-3) -- (0,3);
				\draw[thick, blue] (0,0) circle [radius=2.5];
				\node[blue, fill=white] at (1.5,-1.5) {$\Sigma$};
				\draw[thick] (1.7678-1,1.7678+1) -- (1.7678+1,1.7678-1);
				\node[below left] at (1.7678,1.7678) {$(x,y)$};
				\draw[thick] (0.7678-1, 2.7678-1) -- (0.7678+1, 2.7678+1) -- (1.7678+2, 1.7678) -- (1.7678, 1.7678-2) -- (0.7678-1, 2.7678-1);
				\draw[pattern=horizontal lines, pattern color=red] (1.7678-1,1.7678+1) -- (0.7678+1, 2.7678+1) -- (1.7678+2, 1.7678) -- (1.7678+1,1.7678-1);
				\node[below right, red] at (1.7678+1.25,1.7678-0.75) {$S_{(x,y)}$};
				\draw[->, red, thick] (1.7678,1.7678) -- ((1.7678+0.25,1.7678+1);
				\draw[->, red, thick] (1.7678,1.7678) -- (1.7678+1.5,1.7678-0.25);
				\draw[fill=black] (1.7678,1.7678) circle [radius=0.1];
			\end{tikzpicture}
		\end{subfigure}
		\caption{Induced guards from Example \ref{ex:resets}. Left: A plot of the projected vector field $(\pi_Q)_*X_{H_1}$. As this vector field is nowhere transverse to the gaurd, the induced guard is empty, $S_{H_1}=\emptyset$. Right: A fiber of the induced guard, $S_{(x,y)}\subset T^*_{(x,y)}Q$, corresponding to $H_2$. The red hatched region corresponds to all outward pointing vectors rooted at the point $(x,y)\in\Sigma$.}
		\label{fig:induced_guards}
	\end{figure}
\end{example}

The reset map, $\Delta$, encodes the discontinuous transition at the point of impact. This is constructed via the following axiom; see Chapter 3 in \cite{brogliato}.
\begin{axiom}\label{ax:bounce}
	Mechanical impacts are the identity on the position variables, and the velocity/momentum variables change according to Hamilton's/Lagrange-d'Alembert's principle.
\end{axiom}
Applying this axiom, we end up with the usual Weierstrass-Erdmann corner conditions \cite{gelfand2012calculus}. In the Lagrangian formalism, they take the form
\begin{equation*}
	\begin{split}
		\left(\frac{\partial L}{\partial\dot{x}}^+ - \frac{\partial L}{\partial \dot{x}}^-\right)\delta x &= 0, \\
		-\left( E_L^+ - E_L^-\right)\delta t &= 0,
	\end{split}
\end{equation*}
where $E_L$ is the energy associated to the Lagrangian and the superscripts denote pre- and post-reset states. Likewise, in the Hamiltonian formalism, the corner conditions are
\begin{equation}\label{eq:H_corner_conditions}
	\begin{split}
		\left( p^+ - p^-\right)\delta x &= 0, \\
		-\left( H^+ - H^-\right)\delta t &= 0,
	\end{split}
\end{equation}
where $p$ is the momentum. At the time of impact, $\delta t$ is arbitrary while $\delta x\in TS$ is constrained. Therefore, the corner conditions specialize to
\begin{equation}\label{eq:corner_conditions}
	\begin{split}
		p^+ &= p^- + \varepsilon\cdot dh, \\
		H^+ &= H^-,
	\end{split}
\end{equation}
where $\varepsilon$ is an unknown multiplier to enforce conservation of energy.
These conditions naturally explain elastic impacts as energy is conserved and the change in momentum is proportional to the normal of the impact surface. An intrinsic formulation is presented below.
\begin{theorem}\label{thm:corner}
	The corner conditions \eqref{eq:corner_conditions} are equivalent to
	\begin{equation}\label{eq:impact_conditions}
		\overline{\Delta}^*\vartheta_H = \iota^*\vartheta_H,
	\end{equation}
	where $\iota:\mathbb{R}\times {S}\hookrightarrow \mathbb{R}\times T^*Q$ is the inclusion, $\overline{\Delta}(t,x) = (t,\Delta(x))$, $\vartheta_H$ is the action form, and
	\begin{equation*}
		\vartheta_H := p_i\cdot dx^i - H\cdot dt\in\Omega^1(\mathbb{R}\times T^*Q).
	\end{equation*}
\end{theorem}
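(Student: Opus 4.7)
The plan is to pull back both sides of \eqref{eq:impact_conditions} into local coordinates $(t,x,p)$ on $\mathbb{R}\times T^*Q$ in which $\Sigma = h^{-1}(0)$, and then match coefficients of $dt$ and $dx^i$ against arbitrary tangent vectors of $\mathbb{R}\times S$. The key structural input is Axiom \ref{ax:bounce}: the reset fixes positions, so $\overline{\Delta}(t,x,p) = (t,x,p^+)$ for some $p^+ = p^+(x,p)$.

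First I would compute each pullback. Because $\overline{\Delta}$ is the identity on $(t,x)$, we have $\overline{\Delta}^*(dt) = dt$ and $\overline{\Delta}^*(dx^i) = dx^i$, so
\begin{equation*}
\overline{\Delta}^*\vartheta_H = p_i^+\, dx^i - H^+\, dt,
\end{equation*}
where $H^\pm = H(x,p^\pm)$. The inclusion $\iota:\mathbb{R}\times S\hookrightarrow \mathbb{R}\times T^*Q$ just restricts the action form,
\begin{equation*}
\iota^*\vartheta_H = p_i^-\, dx^i\big|_{\mathbb{R}\times S} - H^-\, dt,
\end{equation*}
with $p^-$ the pre-reset momentum. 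Note that the fiber coordinates $\delta p$ drop out of both pullbacks because $\vartheta_H$ contains no $dp_i$.

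Next I would equate the two one-forms pointwise on $\mathbb{R}\times S$ and test against a generic tangent vector $(\delta t, \delta x, \delta p)$, where $\delta x\in T_x\Sigma$ since $S\subset T^*Q|_\Sigma$. The $dt$ component forces $H^+ = H^-$, recovering the energy condition. The horizontal component gives $(p^+ - p^-)\cdot\delta x = 0$ for every $\delta x\in T_x\Sigma$, so $p^+ - p^-$ annihilates $T_x\Sigma$ and must therefore lie in the one-dimensional conormal line spanned by $dh$. This yields $p^+ - p^- = \varepsilon\, dh$ for some scalar $\varepsilon$, reproducing \eqref{eq:corner_conditions}. The converse follows by reversing the argument, since $dh|_{T\Sigma} = 0$ makes $\varepsilon\, dh$ automatically vanish on $T_x\Sigma$.

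The only genuinely subtle point is correctly identifying the horizontal directions available on $\mathbb{R}\times S$: although $S$ has codimension one in $T^*Q$, that constraint only trims the base to $T\Sigma$ while leaving the fiber direction unrestricted, which is precisely why the momentum jump is captured by a single scalar multiplier $\varepsilon$ rather than an arbitrary covector. Determining $\varepsilon$ explicitly from the energy condition $H^+ = H^-$ is a separate existence and uniqueness question and is not logically required for the equivalence asserted in the theorem.
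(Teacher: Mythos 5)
Your argument is correct and follows essentially the same route as the paper's proof: pull back $\vartheta_H$ to $\mathbb{R}\times S$ in local coordinates, use that the reset fixes $(t,x)$, and match the $dt$ and $dx^i$ coefficients to recover $H^+=H^-$ and $p^+-p^-\in\mathrm{Ann}(T\Sigma)=\mathrm{span}\{dh\}$. The only cosmetic difference is that the paper chooses adapted coordinates with $\Sigma=\{x^n=0\}$ (so the free direction is $p_n$), whereas you phrase the same coefficient comparison invariantly via the conormal line spanned by $dh$; your explicit remark that the fiber directions of $S$ impose no constraint is a welcome clarification, not a departure.
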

\begin{proof}
	Choose local coordinates such that the impact occurs when the last coordinate vanishes, $\Sigma = \{x^n=0\}$. Then, \eqref{eq:impact_conditions} is
	\begin{equation*}
		\left(p_i\circ\Delta\right) dx^i - \left(H\circ\Delta\right) dt = p_idx^i - Hdt, \quad i=1,\ldots,n-1.
	\end{equation*}
	Equating terms, we see that all $p_i$ must remain fixed, with the exception of $p_n$, and $H$ must also remain fixed. This is precisely \eqref{eq:corner_conditions}.
\end{proof}
\begin{remark}
	The impact condition \eqref{eq:impact_conditions} also describes the impacts even when the surface is moving, e.g., a tennis racket striking the tennis ball. Let $\Sigma_t\subset Q$ be the time-dependent surface and suppose that it is described by the level-set of a function $h_t(x) = h(t,x)=0$. Then, \eqref{eq:impact_conditions} dictates that the impact equations are given by
	\begin{equation}\label{eq:time_dependent_corner}
		\begin{split}
			\left( p_i^+ - p_i^-\right) &= \varepsilon\cdot \frac{\partial h}{\partial x^i},\\
			-\left( H^+ - H^-\right) &= \varepsilon \cdot \frac{\partial h}{\partial t}.
		\end{split}
	\end{equation}
\end{remark}

\begin{example}[Bouncing Ball]\label{ex:bball}
	Consider the bouncing ball on an oscillating table, which will serve as a running example throughout. The continuous-time dynamics are given by
	\begin{equation}\label{eq:bball_continuous}
		\dot{x} = \frac{1}{m}y, \quad \dot{y} = -mg,
	\end{equation}
	where $m$ is the mass of the ball, $g$ is the acceleration due to gravity, $x$ is the vertical height of the ball, and $y$ is the ball's momentum. 
	In the simple case where the table is stationary and flat, the impact occurs when $x=0$ and $y\mapsto -y$ is the reset. However, in the case where the table is vertically oscillating, the impact occurs when $x= A\sin\omega t$. From \eqref{eq:impact_conditions}, the reset equations are
	\begin{equation}\label{eq:bball_impact_oscillating}
		\begin{split}
			x & \mapsto x \\
			y & \mapsto -y + 2mA\omega\cos(\omega t),
		\end{split}
	\end{equation}
	which agrees with the usual impact relationship, cf. \S 2.4 in \cite{guckenheimer1983}.
\end{example}
We conclude this section with the important observation that hybrid systems generated by impact systems are symplectic; for a discussion on the symplecticity of impact systems, cf. \cite{fetecau2003}.
\begin{theorem}\label{thm:sympectic_invariant}
	Time-independent impact systems preserve the symplectic form as well as the volume form $\omega^n$. Time-dependent impact systems preserve the form $\omega_H :=-d\vartheta_H$ as well as the volume form $dt\wedge \omega_H^n$.
\end{theorem}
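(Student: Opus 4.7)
The plan is to invoke Theorem \ref{thm:inv_form} directly: for each form in question I check (i) the continuous invariance $\mathcal{L}_X\alpha = 0$, (ii) the specular condition $\Delta^*\alpha = \iota^*\alpha$, and (iii) the energy condition $\Delta^*i_X\alpha = \iota^*i_X\alpha$. The ingredients needed for (ii) and (iii) are exactly what Theorem \ref{thm:corner} (and its time-dependent counterpart \eqref{eq:time_dependent_corner}) provides, so the proof is really an orchestration of results already in hand.

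For the time-independent case, first I would note $\omega = -d\theta$ where $\theta = p_i\,dx^i$ is the canonical one-form. Cartan's formula gives $\mathcal{L}_{X_H}\omega = d i_{X_H}\omega + i_{X_H}d\omega = d(dH) = 0$, settling (i). For (ii), the spatial component of \eqref{eq:impact_conditions} yields $\Delta^*\theta = \iota^*\theta$, and commuting pullback with $d$ gives $\Delta^*\omega = \iota^*\omega$. For (iii), $i_{X_H}\omega = dH$, so the condition reduces to $\Delta^*dH = \iota^*dH$, which follows by pulling back $d$ applied to the energy identity $\Delta^*H = \iota^*H$ (the $dt$ component of \eqref{eq:impact_conditions}). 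Hence $\omega \in \mathcal{A}_{\mathcal{H}}$, and since $\mathcal{A}_{\mathcal{H}}$ is closed under $\wedge$, the volume $\omega^n \in \mathcal{A}_{\mathcal{H}}$ as well.

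For the time-dependent case I would work on the extended phase space $\mathbb{R} \times T^*Q$ with the dynamical vector field $Y := \partial_t + X_H$ (where $X_H$ is now time-dependent). A short direct calculation shows $i_Y\omega_H = 0$: splitting $\omega_H = dx^i\wedge dp_i + dH\wedge dt$, the $\partial_t$-contribution produces $-dH + (\partial_t H)\,dt$ while the $X_H$-contribution produces $dH - (\partial_t H)\,dt$. Consequently $\mathcal{L}_Y \omega_H = d\,i_Y\omega_H + i_Y d\omega_H = 0$, giving (i). For (ii), Theorem \ref{thm:corner} asserts $\overline{\Delta}^*\vartheta_H = \iota^*\vartheta_H$, and applying $-d$ yields $\overline{\Delta}^*\omega_H = \iota^*\omega_H$. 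The energy condition (iii) is trivial because $i_Y\omega_H = 0$ already. Thus $\omega_H \in \mathcal{A}_{\mathcal{H}}$.

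Finally, for the invariance of $dt\wedge \omega_H^n$, I would apply Theorem \ref{thm:inv_form} to $dt$ itself: $\mathcal{L}_Y dt = d(dt(Y)) = d(1) = 0$; $\overline{\Delta}^* dt = dt = \iota^* dt$ since $\overline{\Delta}$ fixes the $t$-coordinate; and $i_Y dt = 1$, whose pullback trivially matches. Thus $dt \in \mathcal{A}_{\mathcal{H}}$, and closure under $\wedge$ yields $dt\wedge \omega_H^n \in \mathcal{A}_{\mathcal{H}}$. There is no serious obstacle here; the only delicate bookkeeping is keeping the time-dependent geometry straight and recognizing that the extended Hamiltonian vector field is characterized by $i_Y\omega_H = 0$ and $dt(Y) = 1$, after which every invariance statement collapses onto Theorem \ref{thm:inv_form} applied to forms built from $\vartheta_H$ and $dt$.
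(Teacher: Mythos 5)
Your proposal is correct and follows essentially the same route as the paper: verify the specular and energy conditions of Theorem \ref{thm:inv_form} for $\omega$ (using $\Delta^*\vartheta=\iota^*\vartheta$ and $\Delta^*dH=\iota^*dH$) and for $\omega_H$ (using Theorem \ref{thm:corner} and the vanishing of the contraction of $\omega_H$ with the extended dynamical vector field), then obtain the volume forms from closure of $\mathcal{A}_\mathcal{H}$ under the wedge product. Your added details — the Cartan-formula check of $\mathcal{L}_X\alpha=0$, the explicit computation $i_Y\omega_H=0$ for $Y=\partial_t+X_H$, and the verification that $dt$ itself is invariant — are correct elaborations of steps the paper leaves implicit.
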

\begin{proof}
	We start with the time-independent case. The energy condition is satisfied via conservation of energy:
	\begin{equation*}
		\Delta^*i_{X_H}\omega = \Delta^*dH = \iota^*dH = \iota^*i_{X_H}\omega.
	\end{equation*}
	The specular condition follows from
	\begin{equation*}
		\Delta^*\omega = -\Delta^*d\vartheta = -d\Delta^*\vartheta = -d\iota^*\vartheta = \iota^*\omega.
	\end{equation*}
	We next consider the time-dependent case. The energy condition is trivially satisfied as $i_{X_H}\omega_H=0$. The specular condition follows from
	\begin{equation*}
		\overline{\Delta}^*\omega_H = -d\overline{\Delta}^*\vartheta_H = -d\iota^*\vartheta_H = \iota^*\omega_H.
	\end{equation*}
	The invariant volumes follow as the wedge product of invariant forms remains invariant.
\end{proof}

\section{Hybrid control systems}\label{sec:problem}
A hybrid control system is an extension of HDSs in Definition \ref{def:HDS}. A similar object, \textit{geometric hybrid systems}, along with a study of their solutions and controllability, can be found in \cite{linan_controllability}. In our definition of a hybrid control system, it is assumed that the controls only influence the continuous dynamics and neither the guard nor the reset.
\begin{definition}
	A hybrid control system is a 5-tuple $\mathcal{HC}=(M,\mathcal{U},\Sigma,f,\Delta)$ such that
	\begin{enumerate}
		\item $M$ is a smooth (finite-dimensional, $\dim(M)=n$) manifold,
		\item $\Sigma\subset M$ is a smooth embedded submanifold with co-dimension 1,
		\item $\mathcal{U}\subset \mathbb{R}^m$ is a closed subset consisting of admissible controls,
		\item $f:M\times V\to TM$ is smooth where $\mathcal{U}\subset V$ is an open neighborhood, and
		\item $\Delta:\Sigma\to M$ is a smooth map such that $\Delta(\Sigma)$ is also a smooth embedded submanifold with a continuous extension $\overline{\Delta}:\overline{\Sigma}\to M$.
	\end{enumerate}
	Moreover, a regular hybrid control system additionally satisfies
	\begin{enumerate}
		\item[6.] $\Sigma\cap\Delta(\Sigma)=\emptyset$ and $\overline{\Sigma}\cap\overline{\Delta(\Sigma)}\subset M$ has co-dimension at least 2.
	\end{enumerate}
\end{definition}
The set $\mathcal{U}$ contains all admissible control values while control signals lie within the set $\mathcal{U}^{[t_0,t_f]}$ of (measurable) functions $[t_0,t_f]\to\mathcal{U}$. Using the notation of $\Sigma$ rather than $S$ will be apparent in Definition \ref{def:extended_rm} below.

For a given hybrid control system, an optimal control problem can be imposed by introducing the \textit{cost functional} subject to a running and terminal cost
\begin{gather*}
	J: M\times \mathcal{U}^{[t_0,t_f]}\times [t_0,t_f] \to \mathbb{R} \\
	J(x_0, u(\cdot), s) = \int_s^{t_f} \, \ell(x(t),u(t))\, dt + g\left( x(t_f)\right),
\end{gather*}
subject to the controlled dynamics
\begin{equation*}
	\begin{cases}
		\dot{x}(t) = f(x(t),u(t)), & x(t)\not\in \Sigma, \\
		x(t)^+ = \Delta(x(t)^-), & x(t)\in \Sigma,
	\end{cases}
\end{equation*}
and the initial condition $x(t_0) = x_0$. Solutions to the optimal control problem are given by
\begin{equation*}
	u^*_{x_0}(\cdot) = \arg \min_{u\in\mathcal{U}^{[t_0,t_f]}} \, J(x_0, u(\cdot), 0),
\end{equation*}
assuming they exist. The cost associated to the optimal control is called the \textit{value function} and is 
\begin{gather*}
	J^*:M\times [t_0,t_f] \to \mathbb{R} \\
	J^*(x_0,s) := J(x_0, u_{x_0}^*(\cdot), s).
\end{gather*}

An alternative optimal control problem to the terminal cost above is the fixed end point problem. Both of these problems can be addressed by the hybrid maximum principle and are explicitly stated below.
\begin{problem}[Terminal cost]\label{prob:TI_TC}
	The cost functional to be minimized for systems subject to a terminal cost, $g:M\to\mathbb{R}$, is given by,
	\begin{gather*}
		J:M\times \mathcal{U}^{[t_0,t_f]}\times [t_0,t_f]\to\mathbb{R}, \\
		J(x_0,u(\cdot),s) = \int_s^{t_f} \, \ell(x(t),u(t))\, dt + g(x(t_f)),
	\end{gather*}
	subject to $x(t_0)=x_0$ and the dynamics
	\begin{equation*}
		\begin{cases}
			\dot{x}(t) = f(x(t),u(t)), & x(t)\not\in \Sigma, \\
			x(t)^+ = \Delta(x(t)^-), & x(t)\in \Sigma.
		\end{cases}
	\end{equation*}
\end{problem}
\begin{problem}[Fixed end points]\label{prob:TI_FE}
	The cost functional to be minimized for systems subject to fixed end points is given by,
	\begin{gather*}
		J:M\times \mathcal{U}^{[t_0,t_f]}\times [t_0,t_f]\to\mathbb{R}, \\
		J(x_0,u(\cdot),s) = \int_s^{t_f} \, \ell(x(t),u(t))\, dt,
	\end{gather*}
	subject to $x(t_0)=x_0$, $x(t_f)=x_f$, and the dynamics
	\begin{equation*}
		\begin{cases}
			\dot{x}(t) = f(x(t),u(t)), & x(t)\not\in \Sigma, \\
			x^+ = \Delta(x^-), & x(t)\in \Sigma.
		\end{cases}
	\end{equation*}
\end{problem}

\section{Necessary conditions for optimality}\label{sec:necessary}
For continuous-time systems, the value function satisfies the Hamilton-Jacobi-Bellman (HJB) partial differential equation (PDE). Solving this PDE via the method of characteristics results in the PMP. In the hybrid setting, the value function will satisfy the \textit{hybrid} HJP PDE which will result in the HPMP.

Away from resets, the optimality conditions will be identical to the usual continuous ones. At a point of reset, the value function undergoes a discontinuity and can be described by an \textit{extended reset map}.
Let $\mathcal{HC} = (M,\mathcal{U},\Sigma,f,\Delta)$ be a hybrid control system with cost functional $J$. Define the Hamiltonian $\hat{H}:T^*M\times\mathcal{U}\times \mathbb{R}\to\mathbb{R}$ where
\begin{equation}\label{eq:formH}
	\hat{H}(x,p,u,p_0) = p_0\ell(x,u) + \langle p,f(x,u)\rangle,
\end{equation}
and the optimal Hamiltonian $H:T^*M\times \mathbb{R}\to\mathbb{R}$, 
\begin{equation}\label{eq:minH}
	H(x,p,p_0) = \min_u \, \hat{H}(x,p,u,p_0).
\end{equation}

We will turn the hybrid control system on $M$ into a Hamiltonian hybrid system on $T^*M$ . In order to do so, we need to specify the vector field, the guard and the reset map of the new system. The vector field will be given by the Hamiltonian vector field arising from the optimal Hamiltonian \eqref{eq:minH}. The \textit{extended guard} contains the set of ``outward pointing momenta" as in Section \ref{sec:impact_system}. The  ``Hamiltonian jump condition'' \eqref{eq:impact_conditions}  will produce the \textit{extended reset map}.
\begin{definition}[Extended Guard and Reset Map]\label{def:extended_rm}
	Let $\mathcal{HC}=(M,\mathcal{U},\Sigma,f,\Delta)$ be a hybrid control system and $H:T^*M\to\mathbb{R}$ an optimal Hamiltonian. Then, the set 
	\begin{equation}\label{eq:extended_guard}
		S = \left\{ (x,p)\in T^*M|_\Sigma : \pi_M^*dh(X_H) > 0\right\} \subset T^*M|_\Sigma,
	\end{equation}
	is called the \textit{extended guard} where $\Sigma = h^{-1}(0)$ and $\pi_M:T^*M\to M$ is the cotangent bundle projection. A smooth map $\tilde{\Delta}:S\to T^*M$ is called the \textit{extended reset} if it satisfies
	\begin{equation}\label{eq:controlled_impact}
		\left( \mathrm{Id}\times\tilde{\Delta}\right)^*\vartheta_H = \iota^*\vartheta_H, \quad
		\begin{tikzcd}
			\mathbb{R}\times S \arrow[r, "\mathrm{Id}\times\tilde{\Delta}"] \arrow[d, "\mathrm{Id}\times\pi_M"] & \mathbb{R}\times T^*M \arrow[d, "\mathrm{Id}\times\pi_M"] \\
			\mathbb{R}\times \Sigma \arrow[r,"\mathrm{Id}\times\Delta"] & \mathbb{R}\times M
		\end{tikzcd}
	\end{equation}
	such that the diagram is commutative, and $\vartheta_H$ is the action form
	\begin{equation*}
		\vartheta_H = p_i\cdot dx^i - H\cdot dt \in \Omega^1(\mathbb{R}\times T^*M).
	\end{equation*}
\end{definition}
The extended reset \eqref{eq:controlled_impact} is a natural extension of the corner conditions \eqref{eq:impact_conditions} and can be expressed in coordinates by
\begin{equation*}
	\begin{split}
		p^+\circ\Delta_* &= p^- + \varepsilon\cdot dh, \\
		H\circ\tilde{\Delta} &= H.
	\end{split}
\end{equation*}

Existence/uniqueness of solutions to \eqref{eq:controlled_impact} is not guaranteed. In particular, if $\Delta$ fails to be immersive, the corner conditions become over-determined. Fortunately, consistency conditions can be found to make \eqref{eq:controlled_impact} neither over- nor under-determined \cite{cos_subm}. The type of HDS described above is called a Hamiltonian HDS. This notion is stricter than is often studied, e.g., \cite{leo_integrable}.

\begin{definition}[Hamiltonian HDS]
	An HDS, $\mathcal{H}=(N, S, X, \tilde{\Delta})$, is Hamiltonian if 
	\begin{enumerate}
		\item $N = T^*M$ is a cotangent bundle,
		\item $X = X_H$ is a Hamiltonian vector field for a Hamiltonian $H:T^*M\to\mathbb{R}$,
		\item There exists an embedded co-dimension 1 submanifold $\Sigma\subset M$ such that $S$ is given by \eqref{eq:extended_guard},
		\item There exists a smooth map $\Delta:S\to M$ such that $\tilde{\Delta}$ satisfies \eqref{eq:controlled_impact}.
	\end{enumerate}
\end{definition}
An impact system is a special type of Hamiltonian HDS where $\Delta=\mathrm{Id}$.

The necessary conditions for optimality are summarized in the theorem below. 

\begin{theorem}[Hybrid Maximum Principle, \cite{liberzon_oc}]\label{thm:HPMP}
	Let $\mathcal{HC} = (M, \mathcal{U}, \Sigma, f, \Delta)$ be a hybrid control system and let $u^*:[t_0,t_f]\to\mathcal{U}$ be an optimal control minimizing either Problem \ref{prob:TI_TC} or Problem \ref{prob:TI_FE} with corresponding state trajectory $x^*:[t_0,t_f]\to M$. If $x^*$ is a regular hybrid arc, then there exists a piecewise smooth curve $p^*:[t_0,t_f]\to T^*M$ and a constant $p_0\geq 0$ such that $(p_0,p^*(t)) \ne (0,0)$ for all $t\in [t_0,t_f]$ along with
	\begin{enumerate}
		\item Continuous flow: $x^*$ and $p^*$ satisfy Hamilton's equations when $(x^*,p^*)\not\in S$:
		\begin{equation*}
			\dot{x}^* = \frac{\partial H}{\partial p}, \quad \dot{p}^* = -\frac{\partial H}{\partial x},
		\end{equation*}
		where the Hamiltonian has the form
		\begin{gather*}
			H:T^*M\times \mathcal{U}\times \mathbb{R}^+ \to \mathbb{R} \\
			H(x,p,u,p_0) = p_0\ell(x,u) + \langle p, f(x,u)\rangle.
		\end{gather*}
		\item Jump condition: Let $\tau \in [t_0,t_f]$ where $(x^*(\tau^-),p^*(\tau^-))\in S$ is a reset time. Then
		\begin{equation*}
			(x^*(\tau^+),p^*(\tau^+)) = \tilde{\Delta}\left( (x^*(\tau^-),p^*(\tau^-))\right).
		\end{equation*}
		\item Minimization condition: For all $t\in [t_0,t_f]$ and $w\in\mathcal{U}$,
		\begin{equation*}
			H(x^*(t),p^*(t), u^*(t), p_0) \leq H(x^*(t),p^*(t), w, p_0).
		\end{equation*}
		\item Boundary conditions: For Problem \ref{prob:TI_TC}, the curves satisfy the boundary conditions
		\begin{equation*}
			x^*(t_0) = x_0, \quad p^*(t_f) = dg_{x^*(t_f)}.
		\end{equation*}
		For Problem \ref{prob:TI_FE}, the curves satisfy the boundary conditions
		\begin{equation*}
			x^*(t_0) = x_0, \quad x^*(t_f) = x_f.
		\end{equation*}
	\end{enumerate}
\end{theorem}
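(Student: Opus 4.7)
The plan is to adapt the classical needle-variation derivation of the PMP to the hybrid setting, handling the reset geometrically via the augmented differential $\Delta_*^X$ of Definition \ref{def:aug_dif}. Since a regular hybrid arc admits only finitely many reset times, induction reduces the analysis to a trajectory with a single reset at some time $\tau\in(t_0,t_f)$; the final assembly then concatenates the arguments across successive resets.

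First I would set up the needle variation: fix a Lebesgue point $s$ of $u^*$ and a value $w\in\mathcal{U}$, replace $u^*$ by $w$ on $[s,s+\varepsilon]$, and obtain a first-order state perturbation $\varepsilon v$ at time $s+\varepsilon$ with $v = f(x^*(s),w)-f(x^*(s),u^*(s))$. Propagate $\delta x$ along $x^*$ by the variational equation on each smooth piece. If $s<\tau$, the perturbed trajectory hits $\Sigma$ at a shifted time $\tau+\varepsilon\,\delta\tau$ with $\delta\tau = -\langle dh,\delta x^-\rangle / \langle dh, f(x^*(\tau^-),u^*(\tau^-))\rangle$, and I claim that to first order the post-reset perturbation equals $\Delta_*^X(\delta x^-)$. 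Next, define the costate $p^*$ by backward propagation: $p^*(t_f) = p_0\,dg|_{x^*(t_f)}$ in the terminal-cost case (respectively free, with $(p_0,p^*(t))\ne(0,0)$ held globally, in the fixed-endpoint case), with $\dot p^* = -\partial H/\partial x$ on smooth pieces, and with a jump at $\tau$ determined by requiring $\langle p^*(\tau^+),\delta x^+\rangle = \langle p^*(\tau^-),\delta x^-\rangle$ for every admissible pre-reset variation $\delta x^-$. Dualizing $\Delta_*^X$ produces exactly the Hamiltonian corner conditions \eqref{eq:corner_conditions}, which by Theorem \ref{thm:corner} are equivalent to $(\mathrm{Id}\times\tilde\Delta)^*\vartheta_H = \iota^*\vartheta_H$, i.e., the extended reset of Definition \ref{def:extended_rm}. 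With these choices, the first variation of $J$ along the needle reduces to $\varepsilon\bigl[\hat H(x^*(s),p^*(s),u^*(s),p_0) - \hat H(x^*(s),p^*(s),w,p_0)\bigr]\le 0$, yielding the minimization condition; the continuous flow, jump, and boundary conditions are then built into the construction.

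The main obstacle is verifying the claim that the post-reset perturbation is exactly $\Delta_*^X(\delta x^-)$ to first order. The perturbed trajectory must be tracked through three stages: continuous flow from $s+\varepsilon$ up to the shifted hitting time $\tau+\varepsilon\,\delta\tau$, application of $\Delta$ at the shifted hitting point, and continuous flow on the post-reset side from $\tau+\varepsilon\,\delta\tau$ back to $\tau$. Expanding each stage to first order in $\varepsilon$ and collecting terms, the tangential component of $\delta x^-$ contributes $\Delta_*$ to the post-reset perturbation, while the time-shift $\delta\tau$ produces an additional $f(x^*(\tau^+),u^*(\tau^+))\,\delta\tau$ term; these are precisely the two defining relations of the augmented differential in Definition \ref{def:aug_dif}. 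The transversality $X_H\notin TS$ needed for $\Delta_*^X$ to be well-defined is guaranteed by the outward-pointing condition \eqref{eq:extended_guard}, and Assumption \ref{as:regular} ensures smooth dependence of the relevant flow compositions on initial data in a neighborhood of the unperturbed arc, justifying all first-order expansions used above.
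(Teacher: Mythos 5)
Your overall needle-variation scaffolding is sound and is essentially the route the paper (following \cite{liberzon_oc,pakniyatthesis}) takes, but the hybrid-specific step --- the derivation of the co-state jump --- has a genuine gap. You dualize the augmented differential $\Delta_*^X$ of the \emph{state} dynamics $f$ alone, requiring $\langle p^*(\tau^+),\delta x^+\rangle = \langle p^*(\tau^-),\delta x^-\rangle$ with $\delta x^+=\Delta_*^X\delta x^-$. Restricted to $\delta x^-\in T\Sigma$ this does give $p^+\circ\Delta_* = p^- + \varepsilon\, dh$, but applied to the flow direction $\delta x^-=f(x^-,u^-)$ it only gives $\langle p^+,f^+\rangle = \langle p^-,f^-\rangle$, i.e.\ conservation of $\langle p,f\rangle$, \emph{not} of the optimal-control Hamiltonian $H=p_0\ell+\langle p,f\rangle$. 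Since in general $\ell(\Delta(x^-),u^+)\neq\ell(x^-,u^-)$, this is not the energy condition in \eqref{eq:corner_conditions}, so the costate you construct does not satisfy the jump condition (2) of the theorem, which requires the extended reset \eqref{eq:controlled_impact}.

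The missing ingredient is the behavior of the running cost through the reset. The reset-time shift $\delta\tau$ you compute also perturbs the accumulated cost: to first order the cost variation jumps by $(\ell^--\ell^+)\,\delta\tau$, and it is the augmented pairing $p_0\,\delta x^0+\langle p,\delta x\rangle$ that must be preserved across the reset, not $\langle p,\delta x\rangle$ alone. Concretely, one must augment the state with the cost coordinate $x^0$, $\dot x^0=\ell$, form the augmented dynamics $g=(\ell,f)$ and its augmented differential $\Delta_*^g$ as in \eqref{eq:oc_variational}, and take the costate jump $\bar p^+=\bar p^-\cdot(\Delta_*^g)^{-1}$ with $\bar p=(p_0,p)$; pairing $\bar p^+$ with $\Delta_*^g\, g^- = g^+$ then yields $H^-=H^+$, which is exactly \eqref{eq:controlled_impact}. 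This is precisely how the paper's proof obtains the jump condition. Without it, your final step also breaks down: the uncancelled term $p_0(\ell^--\ell^+)\,\delta\tau$ survives in the first variation of $J$ whenever the needle is placed before a reset, and the inequality no longer reduces to the minimization condition.
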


\begin{proof}
	The proof for quasi-smooth regular arcs closely follow the arguments presented in the book \cite{liberzon_oc} and can also be found in the thesis \cite{pakniyatthesis}. The principal point of departure between the classical maximum principle and its hybrid version is the jump map \eqref{eq:controlled_impact}. For exposition, we will show that \eqref{eq:controlled_impact} follows from the adjoint of the hybrid variational equation. Consider the augmented control system
	\begin{gather*}
		(x^0, x) \in \mathbb{R}\times M \\
		\dot{x}^0 = \ell(x,u), \quad \dot{x} = f(x,u)
	\end{gather*}
	subject to the initial conditions $x^0(t_0) = 0$ and $x(t_0) = x_0$. The augmented dynamics will be denoted by
	\begin{equation*}
		\dot{y} = g(y,u) = \begin{bmatrix}
			\ell(x,u) \\ f(x,u)
		\end{bmatrix}, \quad y = (x^0,x).
	\end{equation*}
	Variations, $(\delta x^0, \delta x)$,  propagate under the hybrid variational equation \cite{footslip}:
	\begin{equation}\label{eq:hybridVE}
		\begin{cases}
			\delta \dot{y} = \dfrac{\partial g}{\partial y}\cdot \delta y , & t\in\mathcal{T}, \\
			\delta y^+ = \Delta_*^g\cdot\delta y^-, & t\in\mathcal{T},
		\end{cases}
	\end{equation}
	where
	\begin{equation*}
		\mathcal{T} := \left\{ t\in [t_0,t_f] : \lim_{s\nearrow t} p^*(s)\in {S}\right\}
	\end{equation*}
	is the (finite) set of reset times.
	The augmented differential, $\Delta_*^g$, is given by
	\begin{equation}\label{eq:oc_variational}
		\begin{cases}
			\Delta_*^g\cdot \iota_*v = \iota_*\Delta_*v, & v\in T\mathcal{S}, \\
			\Delta_*^g\cdot g\left( y^-, u^-\right) = g\left(y^+, u^+\right),
		\end{cases}
	\end{equation}
	where $\iota:\Sigma\hookrightarrow \mathbb{R}\times M$ is the embedding. Choosing local coordinates such that $\Sigma = \{x^n=0\}$, the differential has the form
	\begin{equation*}
		\Delta_*^g = \begin{bmatrix}
			1 & 0 & A \\ 0 & \Delta_* & B \\ 0 & 0 & C
		\end{bmatrix},
	\end{equation*}
	where $A$, $B$, and $C$ are found such that the second condition in \eqref{eq:oc_variational} is satisfied.
	The co-states, $\bar{p} = (p_0,p)\in T^*\left(\mathbb{R}\times M\right)$, are the dual variables to the variations and evolve according to the adjoint equation:
	\begin{equation}\label{eq:hybridAE}
		\begin{cases}
			\dot{\bar{p}} = -\bar{p} \cdot \dfrac{\partial g}{\partial y}, & t\not\in\mathcal{T}, \\
			\bar{p}^+ = \bar{p}^-\cdot \left(\Delta_*^g\right)^{-1}, & t\in\mathcal{T}.
		\end{cases}
	\end{equation}
	The continuous component yields the usual adjoint process:
	\begin{equation*}
		\dot{p}_0 = 0, \quad \dot{p} = -p_0\frac{\partial \ell}{\partial x} - p\cdot \frac{\partial f}{\partial x},
	\end{equation*}
	while the jump map has $\bar{p}^+ \cdot \Delta_*^g = \bar{p}^-$, which yields
	\begin{equation*}
		p_0^+ = p_0^-, \quad p^+\circ\Delta_* = p^- + \varepsilon\cdot dh.
	\end{equation*}
	Energy conservation comes from the second condition in \eqref{eq:oc_variational} as
	\begin{equation*}
		\begin{split}
			H^- = \langle \bar{p}^-, g^-\rangle &= \langle \bar{p}^+\cdot\Delta_*^g, g^- \rangle \\
			&= \langle \bar{p}^+, \Delta_*^g\cdot g^- \rangle = \langle \bar{p}^+, g^+\rangle = H^+.
		\end{split}
	\end{equation*}
	Therefore, the map $p^-\mapsto p^+$ satisfies \eqref{eq:controlled_impact}.
\end{proof}

A nice corollary of the above construction of the co-state jump is its existence/uniqueness \textit{backward}. Unfortunately, the forward map may fail to be well-defined. This will be partially analyzed in Section \ref{sec:Zeno}.
\begin{corollary}
	Suppose that the flow intersects the guard transversely at a point of reset, i.e., for $x(t)\in \Sigma$, $f(x(t),u(t))\not\in T\Sigma$. Then, the map $p^+\mapsto p^-$ exists and is unique.
\end{corollary}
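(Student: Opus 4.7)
The plan is to read the corollary as a statement about the augmented adjoint jump \eqref{eq:hybridAE} rather than the original corner conditions, exploiting the asymmetry that a linear map need not be invertible in order to pull back covectors. In the notation of the preceding proof, recall that the extended co-state evolves across a reset by
\begin{equation*}
\bar{p}^- = \bar{p}^+ \cdot \Delta_*^g,
\end{equation*}
so given $\bar{p}^+$ the previous co-vector $\bar{p}^-$ is obtained by postcomposition with the augmented differential. Hence the only thing I must verify is that $\Delta_*^g$ is itself well-defined at the impact point; once this is established, the map $\bar{p}^+\mapsto\bar{p}^-$, and in particular its projection $p^+\mapsto p^-$ (with $p_0$ preserved), is simply an application of a fixed linear map and therefore automatically exists and is unique.

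To verify that $\Delta_*^g$ is uniquely defined, I invoke the remark following Definition~\ref{def:aug_dif}: the augmented differential of a map is uniquely specified provided the underlying vector field is not tangent to the guard at the impact point. In the present setting the relevant guard is $\mathbb{R}\times\Sigma\subset\mathbb{R}\times M$ and the relevant vector field is $g(y,u)=(\ell(x,u),f(x,u))$. Since $\ell$ only contributes to the $\partial_{x^0}$ component, which is automatically transverse to $\mathbb{R}\times\Sigma$'s $x^0$-direction only in a degenerate sense, what actually matters is the projection onto $M$: the hypothesis $f(x(t),u(t))\notin T\Sigma$ means precisely that $g(y^-,u^-)\notin T_{y^-}(\mathbb{R}\times\Sigma)$. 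This gives the direct-sum decomposition $T_{y^-}(\mathbb{R}\times M)=T_{y^-}(\mathbb{R}\times\Sigma)\oplus\mathrm{span}(g(y^-,u^-))$, and the two clauses of \eqref{eq:oc_variational} specify $\Delta_*^g$ on each summand, determining it unambiguously.

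From here the corollary is immediate: the assignment $\bar{p}^+\mapsto \bar{p}^+\circ\Delta_*^g=\bar{p}^-$ is a well-defined linear map from $T^*_{y^+}(\mathbb{R}\times M)$ to $T^*_{y^-}(\mathbb{R}\times M)$. Extracting the $p$-component and observing that $p_0$ is conserved recovers $p^+\mapsto p^-$ with existence and uniqueness. I expect essentially no obstacle here; the only subtlety worth flagging is explaining why this direction is unconditionally well-posed while the forward direction $p^-\mapsto p^+$ is not. The reason is exactly the one noted after Definition~\ref{def:aug_dif}: invertibility of $\Delta_*^g$ additionally requires $\Delta$ to be an immersion and $f$ to be transverse to $\Delta(\Sigma)$ after reset, conditions that are not assumed here, whereas the pullback of covectors only needs $\Delta_*^g$ to be defined, not invertible.
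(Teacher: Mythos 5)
Your proposal is correct and follows essentially the same route as the paper: the paper's proof consists precisely of the observation that $\bar{p}^- = \bar{p}^+\cdot\Delta_*^g$ and that $\Delta_*^g$ is well-defined when the flow is transverse to the guard, which is what you verify (in somewhat more detail, including the correct remark that only well-definedness, not invertibility, of $\Delta_*^g$ is needed for the backward co-state map).
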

\begin{proof}
	The map $p^+\mapsto p^-$ is given by $\bar{p}^- = \bar{p}^+\cdot\Delta_*^g$,
	where $\bar{p} = (p_0,p)$. The augmented differential is well-defined as long as the flow is transverse to the guard.
\end{proof}

Although all of the data was assumed to be time-invariant, the above procedure remains valid for time-dependent systems as the (time-dependent) jumps remain subject to \eqref{eq:controlled_impact}.

For both time-independent and time-dependent problems, Theorem \ref{thm:sympectic_invariant} still holds and the proof is identical. For time-independent systems, energy conservation is guaranteed through the corner conditions. The following theorem deals with volume conservation. 

\begin{theorem}\label{thm:controlled_symplectic_invariant}
	Time-independent Hamiltonian HDSs preserve the symplectic form, and time-dependent Hamiltonian HDSs preserve the form $\omega_H$. Consequently, both systems are volume-preserving.
\end{theorem}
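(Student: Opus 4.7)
The proposal is to apply Theorem \ref{thm:inv_form} to the symplectic form $\omega=-d\vartheta$ in the time-independent case and to $\omega_H=-d\vartheta_H$ on $\mathbb{R}\times T^*M$ in the time-dependent case. The proof will parallel that of Theorem \ref{thm:sympectic_invariant}, with the impact reset $\Delta$ replaced by the extended reset $\tilde{\Delta}$, since by construction \eqref{eq:controlled_impact} the latter satisfies an identical corner-type relation on the action form.

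For the time-independent case, I would first record $\mathcal{L}_{X_H}\omega = 0$ as the standard identity from Hamiltonian mechanics. The specular condition then follows by pulling back $\omega=-d\vartheta$ through $\tilde\Delta$ and using $\tilde\Delta^*\vartheta = \iota^*\vartheta$ to obtain $\tilde\Delta^*\omega = -d\tilde\Delta^*\vartheta = -d\iota^*\vartheta = \iota^*\omega$. The energy condition $\tilde\Delta^* i_{X_H}\omega = \iota^* i_{X_H}\omega$ rewrites via $i_{X_H}\omega = dH$ as $\tilde\Delta^* dH = \iota^* dH$, which is precisely the energy-preservation clause $H\circ\tilde\Delta = H|_S$ already built into \eqref{eq:controlled_impact}.

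For the time-dependent case, the same strategy is applied with $\omega_H=-d\vartheta_H$ on $\mathbb{R}\times T^*M$. The Lie derivative condition $\mathcal{L}_{X_H}\omega_H=0$ is again standard, and the specular condition follows by applying $-d$ to $(\mathrm{Id}\times\tilde{\Delta})^*\vartheta_H = \iota^*\vartheta_H$, giving $(\mathrm{Id}\times\tilde{\Delta})^*\omega_H = \iota^*\omega_H$. The energy condition becomes trivial: the extended Hamiltonian vector field on $\mathbb{R}\times T^*M$ satisfies $i_{X_H}\omega_H=0$, so both sides of \eqref{eq:invariance} vanish identically. Invariance of the volume forms $\omega^n$ and $dt\wedge\omega_H^n$ is then immediate from the closing statement of Theorem \ref{thm:inv_form}, since the algebra of invariant forms is closed under $\wedge$.

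The only genuinely delicate point is verifying that \eqref{eq:controlled_impact} encodes the energy condition ``for free''; this is where using the extended action form on $\mathbb{R}\times T^*M$ rather than the Liouville form on $T^*M$ does the heavy lifting, since separating the time and fibre components of $(\mathrm{Id}\times\tilde{\Delta})^*\vartheta_H = \iota^*\vartheta_H$ yields both the tangential-momentum matching needed for the specular condition and the $H^+=H^-$ identity needed for the energy condition. Everything else is a mechanical transcription of the proof of Theorem \ref{thm:sympectic_invariant}.
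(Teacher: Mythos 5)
Your proposal is correct and follows essentially the same route as the paper: the paper explicitly states that the proof is identical to that of Theorem \ref{thm:sympectic_invariant}, i.e., verifying the specular and energy conditions of Theorem \ref{thm:inv_form} via $\mathcal{L}_{X_H}\omega=0$, the pullback identity on the action form from \eqref{eq:controlled_impact}, conservation of $H$ (respectively $i_{X_H}\omega_H=0$ in the time-dependent case), and closure of invariant forms under the wedge product for the volume statement. Your added remark that splitting $(\mathrm{Id}\times\tilde{\Delta})^*\vartheta_H=\iota^*\vartheta_H$ into fibre and time components yields both conditions is exactly the observation the paper relies on when it says energy conservation is guaranteed through the corner conditions.
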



\section{Hybrid Lagrangian submanifolds}\label{sec:hybrid_Lagrange}
In this section, it will be assumed that the map $\tilde{\Delta}:S\to T^*M$ is well-defined and injective (so that it is invertible on its image). This assumption will be dropped in Section \ref{sec:Zeno}.

The necessary conditions in the hybrid maximum principle, Theorem \ref{thm:HPMP}, specify terminal conditions on the state and co-state; for problems with free end points but with terminal cost, this manifests as $p(t_f) = dg_{x_f}$, while for fixed end points, the terminal condition is $x(t_f) = x_f$.
These end conditions state that the terminal conditions must lie within a specified submanifold of $T^*M$
\begin{equation*}
	\begin{array}{lr}
		p(t_f) \in \Gamma_{dg} = \left\{ (x,dg_x): x\in M\right\}, & \text{(Problem \ref{prob:TI_TC})} \\
		p(t_f) \in T_{x_f}^*M, & \text{(Problem \ref{prob:TI_FE})}
	\end{array}
\end{equation*}
Both of these submanifolds are examples of \textit{Lagrangian submanifolds} in the symplectic manifold $T^*M$. In this study of Lagrangian submanifolds, we will always assume that the ambient symplectic manifold is a cotangent bundle with the canonical symplectic form $\omega$.

\begin{definition}[Lagrangian Submanifolds]
	A submanifold $\mathcal{L}\subset T^*M$ is a Lagrangian submanifold if $\omega|_\mathcal{L}=0$ and $\dim\mathcal{L} = \dim M$.
\end{definition}
Both of the terminal manifolds for the optimal control are Lagrangian via the following well-known proposition.
\begin{proposition}[5.3.15 in \cite{abrahammarsden}]
	For a cotangent bundle $T^*M$ with the canonical symplectic structure, the following submanifolds are Lagrangian:
	\begin{enumerate}
		\item $T_x^*M$ for any $x\in M$,
		\item $M$ viewed as the zero-section, and
		\item $\Gamma_\alpha = \left\{ (x,\alpha_x): x\in M \right\}$ for a closed 1-form $\alpha$.
	\end{enumerate}
\end{proposition}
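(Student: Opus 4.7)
The plan is to verify, in each of the three cases, the two defining conditions of a Lagrangian submanifold: that the dimension equals $\dim M = n$ and that the restriction of the canonical symplectic form $\omega = -d\vartheta$ to the submanifold vanishes. In local Darboux coordinates $(x^i, p_i)$ on $T^*M$, the tautological one-form reads $\vartheta = p_i\, dx^i$, so $\omega = dx^i \wedge dp_i$. The dimension counts are immediate in every case (each candidate submanifold is parametrized by exactly $n$ coordinates), so essentially all content lies in checking isotropy.

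For the cotangent fiber $T_x^*M$, I would fix the base point $x$ and parametrize the fiber by $(p_1,\dots,p_n)$; since all $x^i$ are constant along the fiber, the pullbacks of $dx^i$ vanish, and hence $\omega|_{T_x^*M}=0$. The zero-section $M \hookrightarrow T^*M$ is handled in the dual manner: the $p_i$ coordinates vanish identically, so the pullbacks of $dp_i$ vanish and again $\omega|_M = 0$. Both verifications are direct and require no structural tools beyond the Darboux description.

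For the graph $\Gamma_\alpha$ of a closed one-form $\alpha$, I would avoid coordinates and exploit the universal property of the tautological one-form: for any section $\sigma: M \to T^*M$, viewed simultaneously as a $1$-form on $M$, one has $\sigma^*\vartheta = \sigma$. Applying this with $\sigma = \alpha$, and noting that $\alpha: M \to \Gamma_\alpha$ is a diffeomorphism, I compute
\begin{equation*}
\alpha^*\omega = -\alpha^* d\vartheta = -d(\alpha^*\vartheta) = -d\alpha = 0,
\end{equation*}
where the last equality uses that $\alpha$ is closed. Pulling back along the inverse diffeomorphism then gives $\omega|_{\Gamma_\alpha}=0$. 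The only subtle point is the tautological identity $\sigma^*\vartheta = \sigma$, which is what makes the closedness hypothesis both natural and sharp; everything else is just commuting pullback with $d$. I do not anticipate any real obstacle here, the proposition being a standard recollection whose only substantive ingredient is the tautological identity for the graph case.
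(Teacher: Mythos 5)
Your argument is correct: the paper itself gives no proof of this proposition, citing it directly from Abraham--Marsden, and your verification (Darboux-coordinate check of isotropy and dimension for the fiber and the zero-section, plus the tautological identity $\sigma^*\vartheta=\sigma$ together with $d\alpha=0$ for the graph case) is exactly the standard argument found in that reference. As a minor remark, items (1) and (2) could also be absorbed into the graph case --- the zero-section is $\Gamma_0$ --- but treating them separately as you do is perfectly fine.
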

Lagrangian submanifolds propagate well under the flow of (non-hybrid) Hamiltonian systems.
\begin{proposition}[5.3.32 in \cite{abrahammarsden}]\label{prop:classical_lagrangian}
	Let $\mathcal{L}\subset T^*M$ be a Lagrangian submanifold and let $H:T^*M\to\mathbb{R}$ a Hamiltonian with flow $\varphi_t$. Then,
	\begin{enumerate}
		\item $\varphi_t(\mathcal{L})$ remains a Lagrangian submanifold for all $t$, and
		\item if the submanifold has constant energy, i.e., $H|_\mathcal{L}$ is constant, then it is invariant under the flow, $\varphi_t(\mathcal{L}) = \mathcal{L}$.
	\end{enumerate}
\end{proposition}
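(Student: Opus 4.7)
The plan is to treat the two parts separately and rely in each case on a single structural fact: that the Hamiltonian flow is a symplectomorphism, and that a Lagrangian subspace is its own symplectic orthogonal.

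For part (1), I would first record the standard fact that the flow $\varphi_t$ of $X_H$ satisfies $\varphi_t^*\omega=\omega$. This is immediate from Cartan's formula applied to the Lie derivative: $\mathcal{L}_{X_H}\omega = i_{X_H}d\omega + d\, i_{X_H}\omega = 0 + d(dH) = 0$, using $d\omega=0$ and the defining relation $i_{X_H}\omega=dH$. Once that is in hand, let $\iota:\mathcal{L}\hookrightarrow T^*M$ be the inclusion; then $\varphi_t\circ\iota:\mathcal{L}\to T^*M$ parametrizes $\varphi_t(\mathcal{L})$ (a submanifold because $\varphi_t$ is a diffeomorphism), and
\begin{equation*}
(\varphi_t\circ\iota)^*\omega = \iota^*\varphi_t^*\omega = \iota^*\omega = 0.
\end{equation*}
Since $\dim\varphi_t(\mathcal{L})=\dim\mathcal{L}=\dim M$, the half-dimension condition persists, so $\varphi_t(\mathcal{L})$ is again Lagrangian.

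For part (2), the strategy is to show that $X_H$ is tangent to $\mathcal{L}$, which forces $\mathcal{L}$ to be flow-invariant. Fix $p\in\mathcal{L}$. Because $H|_\mathcal{L}$ is constant, $dH_p$ annihilates every $v\in T_p\mathcal{L}$, so
\begin{equation*}
\omega_p\bigl(X_H(p),v\bigr) = dH_p(v) = 0 \quad \text{for all } v\in T_p\mathcal{L}.
\end{equation*}
Hence $X_H(p)\in(T_p\mathcal{L})^\omega$, where $(\cdot)^\omega$ denotes the symplectic orthogonal. The Lagrangian property means precisely that $T_p\mathcal{L}$ is maximal isotropic, hence self-orthogonal: $(T_p\mathcal{L})^\omega=T_p\mathcal{L}$. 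Thus $X_H(p)\in T_p\mathcal{L}$ for every $p\in\mathcal{L}$, and standard ODE theory on manifolds gives $\varphi_t(\mathcal{L})\subseteq\mathcal{L}$; running time backward with $\varphi_{-t}$ yields the reverse inclusion, so $\varphi_t(\mathcal{L})=\mathcal{L}$.

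Neither part presents a real obstacle; the only place to be careful is the self-orthogonality step in (2), where one should recall the linear-algebra lemma that in a symplectic vector space, an isotropic subspace of half dimension coincides with its symplectic orthogonal. Once that is invoked, the tangency of $X_H$ to $\mathcal{L}$ is automatic, and both conclusions follow. If desired, the same argument adapts verbatim to a time-dependent Hamiltonian by working with the extended symplectic form $\omega_H=-d\vartheta_H$ on $\mathbb{R}\times T^*M$, which is the setting that will matter for the hybrid extension pursued later in the paper.
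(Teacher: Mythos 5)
Your proof is correct, and it is essentially the standard argument the paper relies on: the paper does not prove this proposition itself but cites it as Proposition 5.3.32 of Abraham--Marsden, and its one-line proof of the hybrid analog (``this follows directly from the fact that the hybrid flow preserves the symplectic form'') is exactly your part (1) reasoning, while your part (2) is the usual self-orthogonality argument from the reference. The only point worth a passing remark is that concluding $\varphi_t(\mathcal{L})=\mathcal{L}$ from tangency of $X_H$ to $\mathcal{L}$ implicitly uses that the restricted flow is defined for the relevant times, which is covered by the paper's standing completeness assumption.
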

Extending this idea to hybrid systems results in ``hybrid Lagrangian submanifolds,'' which was introduced in \cite{clarkoprea} and is related to integrability of impact systems \cite{leo_integrable}.
\begin{definition}[Hybrid Lagrangian Submanifolds]
	Let $\mathcal{H} = (T^*M,S,X_H,\tilde{\Delta})$ be a hybrid Hamiltonian system where $\tilde{\Delta}$ satisfies \eqref{eq:controlled_impact}. A submanifold (with boundary) $\mathcal{L}\subset T^*M$ is a hybrid Lagrangian submanifold if
	\begin{enumerate}
		\item $\mathcal{L}\setminus\partial\mathcal{L}\subset T^*M$ is a Lagrangian submanifold, and
		\item $\partial\mathcal{L}\subset S\cup\tilde{\Delta}(S)$ such that $\tilde{\Delta}(\partial\mathcal{L}\cap S) = \partial\mathcal{L}\cap\tilde{\Delta}(S)$.
	\end{enumerate}
\end{definition}
\begin{figure}
	\centering
	\begin{tikzpicture}
		\draw[<->,black] (-1,0) -- (5,0);
		\draw[<->,black] (0,-1) -- (0,4);
		\node[below] at (5,0) {$M$};
		\node[left] at (0,4) {$T^*M$};
		\draw[blue] (1,-1) -- (1,4);
		\draw [fill] (1,0) circle [radius=0.07];
		\node[below right] at (1,0) {$\Sigma$};
		\node[right, blue] at (1,4.2) {$S\subset \pi_M^{-1}(\Sigma)$};
		\draw[red] (4,-1) -- (4,4);
		\draw [fill] (4,0) circle [radius=0.07];
		\node[below left] at (4,0) {$\Delta(\Sigma)$};
		\node[right, red] at (4,4.2) {$\tilde{\Delta}(S)\subset \pi_M^{-1}(\Delta(\Sigma))$};
		\draw[dotted,thick] (3,4) to [out=230,in=30] (2.75,3.7);
		\draw[thick] (2.75,3.7) to [out=220,in=40] (1,3);
		\draw[thick] (4,3) to [out=250, in=120] (1.75,2.15) to [out=300, in=90] (3.3,1.75) to [out=270, in=45] (1,1.25);
		\draw[thick] (4,1) to [out=230, in=10] (2.5,0.5);
		\draw[thick,dotted] (2.5,0.5) to [out=10,in=45] (2,0.25);
		\node[above] at (2.5,2) {$\mathcal{L}$};
		\draw[->,thick,dashed] (1,3) to [out=-130,in=45] (4,3);
		\draw[->,thick,dashed] (1,1.25) to [out=-130,in=45] (4,1);
	\end{tikzpicture}
	\caption{An example of hybrid Lagrangian $\mathcal{L}\subset T^*M$ \cite{clarkoprea}.}
	\label{fig:hybrid_Lagrangian}
\end{figure}
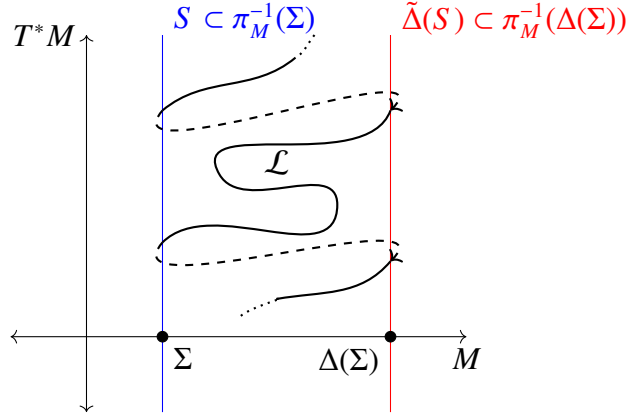
Proposition \ref{prop:classical_lagrangian} follows from the fact that the flow of a Hamiltonian system preserves the symplectic form. As this remains the case for impact systems via Theorem \ref{thm:controlled_symplectic_invariant}, an analogous result holds for impact systems.
\begin{proposition}
	Let $\mathcal{L}\subset T^*M$ be a hybrid Lagrangian submanifold and let $\varphi_t^\mathcal{H}$ be the flow of the Hamiltonian hybrid system. Then,
	\begin{enumerate}
		\item for all $t$, $\varphi_t^\mathcal{H}(\mathcal{L})$ is a hybrid Lagrangian submanifold, and
		\item if $\mathcal{L}$ has constant energy, then $\varphi_t^\mathcal{H}(\mathcal{L}) = \mathcal{L}$.
	\end{enumerate}
\end{proposition}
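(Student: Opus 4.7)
The plan is to follow the structure of the classical proof (Proposition \ref{prop:classical_lagrangian}), replacing invariance of the symplectic form $\omega$ under a smooth Hamiltonian flow with its hybrid analog supplied by Theorem \ref{thm:controlled_symplectic_invariant}. The hybrid flow $\varphi_t^{\mathcal{H}}$ is a concatenation of smooth Hamiltonian arcs separated by applications of the reset $\tilde{\Delta}$, both of which preserve $\omega$: Hamilton's equations on the continuous pieces, and the specular condition in Theorem \ref{thm:controlled_symplectic_invariant} at each jump. Under Assumption \ref{as:regular} (quasi-smooth dependence, regular arcs), only finitely many resets occur on $[0,t]$, so decomposing the interval at the reset times and pulling back $\omega$ segment by segment yields $(\varphi_t^{\mathcal{H}})^{*}\omega = \omega$ away from the reset set. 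Since $\mathcal{L}\setminus\partial\mathcal{L}$ is already Lagrangian, its image under $\varphi_t^{\mathcal{H}}$ is therefore isotropic and of the correct dimension, hence Lagrangian; this settles the interior part of (1).

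For the boundary condition in (1), the new boundary of $\varphi_t^{\mathcal{H}}(\mathcal{L})$ comes from two sources: points of the old boundary $\partial\mathcal{L}\subset S\cup\tilde{\Delta}(S)$ that have been transported, and points of $\mathcal{L}\setminus\partial\mathcal{L}$ whose hybrid trajectory terminates at $S$ during $[0,t]$. I would organize this by a short-time/induction argument on reset times: for $t$ small enough that no new resets occur, the old $S$-boundary jumps via $\tilde{\Delta}$ into $\tilde{\Delta}(S)$ and then flows smoothly, while the old $\tilde{\Delta}(S)$-boundary merely flows smoothly, so the identity $\tilde{\Delta}(\partial\mathcal{L}\cap S) = \partial\mathcal{L}\cap\tilde{\Delta}(S)$ translates directly into the corresponding identity for $\varphi_t^{\mathcal{H}}(\mathcal{L})$. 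Longer times are handled by iterating across the successive reset times guaranteed to be finite in number by regularity, each iteration pairing a newly-created $S$-boundary piece with its $\tilde{\Delta}$-image.

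For (2), I would run the hybrid version of the standard argument. Constancy of $H|_{\mathcal{L}}$ combined with the Lagrangian property yields $X_H\in (T\mathcal{L})^{\omega}=T\mathcal{L}$ pointwise, so the smooth Hamiltonian pieces preserve $\mathcal{L}$. At a reset, $\tilde{\Delta}$ carries $\partial\mathcal{L}\cap S$ bijectively onto $\partial\mathcal{L}\cap\tilde{\Delta}(S)\subset\mathcal{L}$ by the second defining property of a hybrid Lagrangian submanifold, and the energy condition $H\circ\tilde{\Delta}=H$ from Theorem \ref{thm:corner} preserves the common value of $H$, so $\mathcal{L}$ is closed under the jump as well. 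Iterating gives $\varphi_t^{\mathcal{H}}(\mathcal{L})\subset\mathcal{L}$, and equality follows by running time backward, using invertibility of $\tilde{\Delta}$ on regular arcs (which is where the transversality built into the extended guard \eqref{eq:extended_guard} and Definition \ref{def:aug_dif} is used). The main technical obstacle I anticipate is the bookkeeping in part (1): one must verify that freshly-created boundary pieces always appear in matching $S/\tilde{\Delta}(S)$ pairs rather than as bare components in $S$ alone. Transversality of $X_H$ to $S$ on the extended guard together with the quasi-smooth dependence property should make this pairing automatic, but it is the step requiring the most care.
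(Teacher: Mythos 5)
Your proposal is correct and rests on the same fact the paper uses: the hybrid Hamiltonian flow preserves the symplectic form (Theorem \ref{thm:controlled_symplectic_invariant}), so images of (hybrid) Lagrangian submanifolds stay Lagrangian and the constant-energy case follows from $X_H\in(T\mathcal{L})^{\omega}=T\mathcal{L}$ together with the boundary-matching property at resets. The paper's own proof is exactly this one-line appeal to symplectic invariance; your write-up simply supplies the boundary bookkeeping and the backward-invertibility details that the paper leaves implicit.
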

\begin{proof}
	This follows directly from the fact that the hybrid flow preserves the symplectic form.
\end{proof}
\subsection{Lagrangian intersections}\label{subsec:intersect}
The hybrid maximum principle states that necessary conditions for an optimal solution must satisfy the boundary conditions
\begin{equation*}
	p(t_0) \in T_{x_0}^*M, \quad p(t_f)\in\mathcal{L},
\end{equation*}
where $\mathcal{L} = \Gamma_{dg}$ or $T_{x_f}^*M$, depending on the problem. All of these sets are hybrid Lagrangian submanifolds, and a solution to $p(t_0)$ can be found by considering the intersection
\begin{equation}\label{eq:Lagrangian_intersection}
	p(t_0) \in T_{x_0}^*M \cap \varphi_{-t_f}^\mathcal{H}(\mathcal{L}),
\end{equation}
which is an intersection of hybrid Lagrangian submanifolds. A reason why the hybrid maximum principle is only necessary rather than sufficient is that this intersection may contain multiple solutions (which correspond to critical points of the optimal control problem). A geometric study of this intersection in the continuous-time domain is performed in \cite{lagrangian_manifolds}.
An analogous study for hybrid systems is a subject of future work.
\subsection{Caustics}\label{subsec:caustics}
Given a function $g:M\to\mathbb{R}$, it follows that $\varphi_t^\mathcal{H}(\Gamma_{dg})$ is a (hybrid) Lagrangian submanifold. A natural question to ask is whether or not this is also a graph of an exact (or closed) 1-form. This fails at caustics.
\begin{definition}[Hybrid caustics]
	The hybrid caustic of $\mathcal{L}\subset T^*M$ is the critical value set of $\pi_M:\mathcal{L}\to M$. This set is denoted by $\mathcal{C}(\mathcal{L})\subset M$. For a hybrid Hamiltonian flow $\varphi_t^\mathcal{H}$, the caustic trajectory is given by
	\begin{equation*}
		\mathcal{CT}(\mathcal{L}) := \left\{(t,x): x\in \mathcal{C}\left( \varphi_t^\mathcal{H}(\mathcal{L})\right) \right\} = \bigcup_t \, \mathcal{C}\left( \varphi_t^\mathcal{H}(\mathcal{L})\right).
	\end{equation*}
\end{definition}
When the original Lagrangian submanifold is a cotangent fiber, $\mathcal{L} = T_{x}^*M$, the notion of a caustic is closely related to conjugate points, cf. \cite{ascontrol}. Intuitively, a caustic (at a time $t$) is a point  where the Lagrangian submanifold $\mathcal{L}_t:= \varphi_t^\mathcal{H}(\mathcal{L})$ folds on itself, and, thus, the Hybrid maximum principle becomes a necessary, but not sufficient condition for optimality. The caustic trajectory is the union of all caustic points over all times.

\begin{definition}[Hybrid conjugate point]
	The point $(t_1, x_1)\in \mathbb{R}\times M$ is hybrid conjugate to $(t_0, x_0)$ if $x_1$ is a critical value of $\pi_M:\varphi_{t_1-t_0}^\mathcal{H}\left(T_{x_0}^*M\right) \to M$.
\end{definition}
Hybrid conjugate points (and more generally, hybrid caustics) can be found via the variational equation. Consider the family of Lagrangian manifolds,
\begin{equation*}
	\mathcal{L}_0 = T_{x_0}^*M, \quad \mathcal{L}_t = \varphi_t^\mathcal{H}\left(\mathcal{L}_0\right),
\end{equation*}
where $\varphi_t^\mathcal{H}$ is the flow of the hybrid Hamiltonian system. If $(x(t),p(t))\in \mathcal{L}_t$ such that $\pi_M:\mathcal{L}_t\to M$ fails to have full rank, $x(t)\in\mathcal{C}(\mathcal{L}_t)$, and it can be found in the following way: Consider the variational equation

\begin{equation}\label{eq:hybrid_ve}
	\begin{cases}
		\dfrac{d}{dt}\Phi(t,t_0) = A(t)\cdot\Phi(t,t_0), & (x(t),p(t))\not\in {S}, \\[2ex]
		\Phi(t^+,t_0) = \tilde{\Delta}^{X_H}_*\cdot \Phi(t^-,t_0), & (x(t),p(t))\in {S},
	\end{cases}
\end{equation}
where
\begin{equation*}
	A(t) = \begin{bmatrix}
		\dfrac{\partial^2 H}{\partial x^2} & \dfrac{\partial^2 H}{\partial p\partial x} \\[2ex]
		-\dfrac{\partial^2 H}{\partial x \partial p} & -\dfrac{\partial^2 H}{\partial p^2}
	\end{bmatrix},
\end{equation*}
$\tilde{\Delta}^{X_H}_*:TT^*M|_S\to TT^*M$ is the augmented differential, and $\Phi(t,t_0):T_{(x_0,p_0)}T^*M\to T_{(x_t,p_t)}T^*M$ is the state transition matrix.
Then, the point $x(t)\in \mathcal{C}(\mathcal{L}_t)$ (equivalently, $(x(t),t)$ is conjugate to $(x_0,0)$) when
\begin{equation*}
	\det \Phi_{12}(t,t_0)=0, \quad \text{where} \quad  \Phi(t,t_0) = \begin{bmatrix}
		\Phi_{11}(t,t_0) & \Phi_{12}(t,t_0) \\ \Phi_{21}(t,t_0) & \Phi_{22}(t,t_0)
	\end{bmatrix}.
\end{equation*}
\begin{example}[Bouncing Ball cont.]\label{ex:bball_lagrangian}
	Recall the bouncing ball from Example \ref{ex:bball} with dynamics governed by \eqref{eq:bball_continuous} and \eqref{eq:bball_impact_oscillating}, and suppose that the table is not oscillating so the guard is time-invariant.
	The phase-space, $T^*\mathbb{R}^+\subset\mathbb{R}^2$, consists of the first and fourth quadrants in the plane. 
	Let the initial Lagrangian submanifold be $\mathcal{L}_0 = T^*_{x_0}\mathbb{R}^+$ corresponding to the initial position of the ball $x(0) = x_0$. Conjugate points never occur in the purely continuous dynamics - if $\varphi_t^c$ is the flow of the continuous dynamics, then $\varphi_t^c(T_{x_0}^*\mathbb{R}) \subset \mathbb{R}^2$ is a line. Moreover, the state-transition matrix is
	\begin{equation*}
		\Phi(t,t_0) = \begin{bmatrix}
			1 & \dfrac{1}{m}(t-t_0) \\ 0 & 1
		\end{bmatrix}, \quad \Phi_{12}(t,t_0) = \frac{1}{m}(t-t_0).
	\end{equation*}
	As $\Phi_{12}(t,t_0)\ne 0$ when $t>t_0$, conjugates do not occur. 
	
	When the state undergoes a reset, the state-transition jumps according to $\Phi^+ = J\Phi$, where $J$ is the hybrid Jacobian of \eqref{eq:bball_impact_oscillating} and is given by
	\begin{equation*}
		J = \begin{bmatrix}
			-1 & 0 \\ -\dfrac{2m^2g}{y} & -1
		\end{bmatrix}.
	\end{equation*}
	The resulting values of $\Phi_{12}(t)$ are no longer monotone and a caustic may appear. A qualitative picture of the propagated manifold $\mathcal{L}_t = \varphi_t^\mathcal{H}(T_{x_0}^*\mathbb{R}^+)$ is shown in Figure \ref{fig:bball_fold}.
	A numerical demonstration of the caustic trajectory is shown in Figure \ref{fig:envelope_traj} with the values $x_0 = 1$, $m=1$, and $g=2$. One can view Figure \ref{fig:bball_fold} as a time slice of Figure \ref{fig:envelope_traj}. 
	
	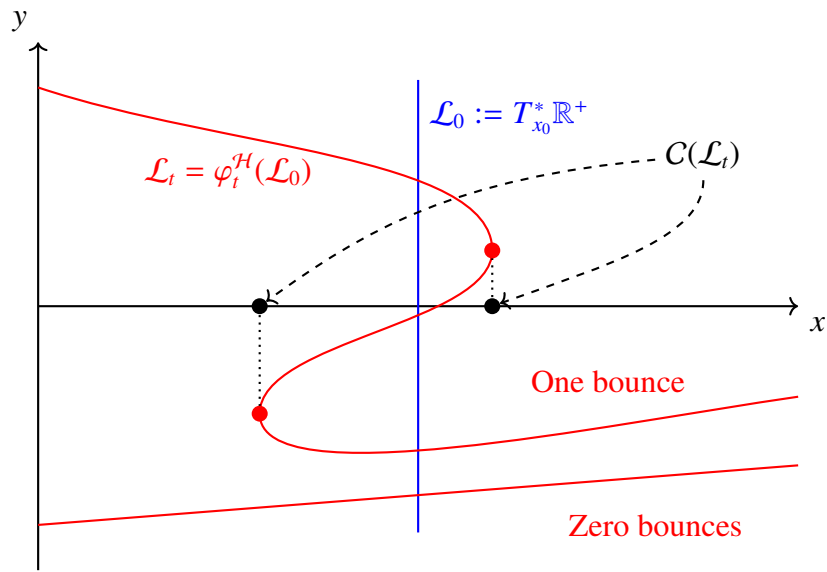
\begin{figure}
		\centering
		\begin{tikzpicture}[xscale=2.5]
			\draw[->,thick,black] (0,0) -- (4,0);
			\draw[->,thick,black] (0,-3.5) -- (0,3.5);
			\node[below right] at (4,0) {$x$};
			\node[above left] at (0,3.5) {$p$};
			\draw[thick, blue] (2,-3) -- (2,3);
			\node[blue, right] at (2,2.5) {$\mathcal{L}_0:=T_{x_0}^*\mathbb{R}^+$};
			\draw[red, thick] (0,-2.9) -- (4,-2.11);
			\draw[red, thick] (0,2.9) to [out=-40,in=90] (2.39,0.739) to [out=-90, in=90] (1.166,-1.425) to [out=-90, in=-160] (4,-1.2);
			\draw[dotted, thick] (2.39,0.739) -- (2.39,0);
			\draw[dotted, thick] (1.166,-1.425) -- (1.166,0);
			\node[dot] (R) at (2.39,0) {};
			\node[dot] (L) at (1.166,0) {};
			\node[dot, red] at (2.39,0.739) {};
			\node[dot, red] at (1.166,-1.425) {};
			\node[red] at (1,1.8) {$\mathcal{L}_t = \varphi_t^\mathcal{H}(\mathcal{L}_0)$};
			\node[red] at (3,-1) {One bounce};
			\node[red] at (3.25,-2.9) {Zero bounces};
			\node (C) at (3.5,2) {$\mathcal{C}(\mathcal{L}_t)$};
			\draw[->, thick, dashed] (C) to [out=190,in=60] (L);
			\draw[->, thick, dashed] (C) to [out=-90,in=40] (R);
		\end{tikzpicture}
		\caption{A plot of the initial and propagated hybrid Lagrangian manifolds for the bouncing ball. The propagated manifold is disconnected where the components depend on the number of resets. Conjugate points occur where $\mathcal{L}_t$ ``folds'' and are indicated by the red points while the caustic is their projection onto the black points.}
		\label{fig:bball_fold}
	\end{figure}
	\begin{figure}
		\centering
		\includegraphics[width=0.95\linewidth]{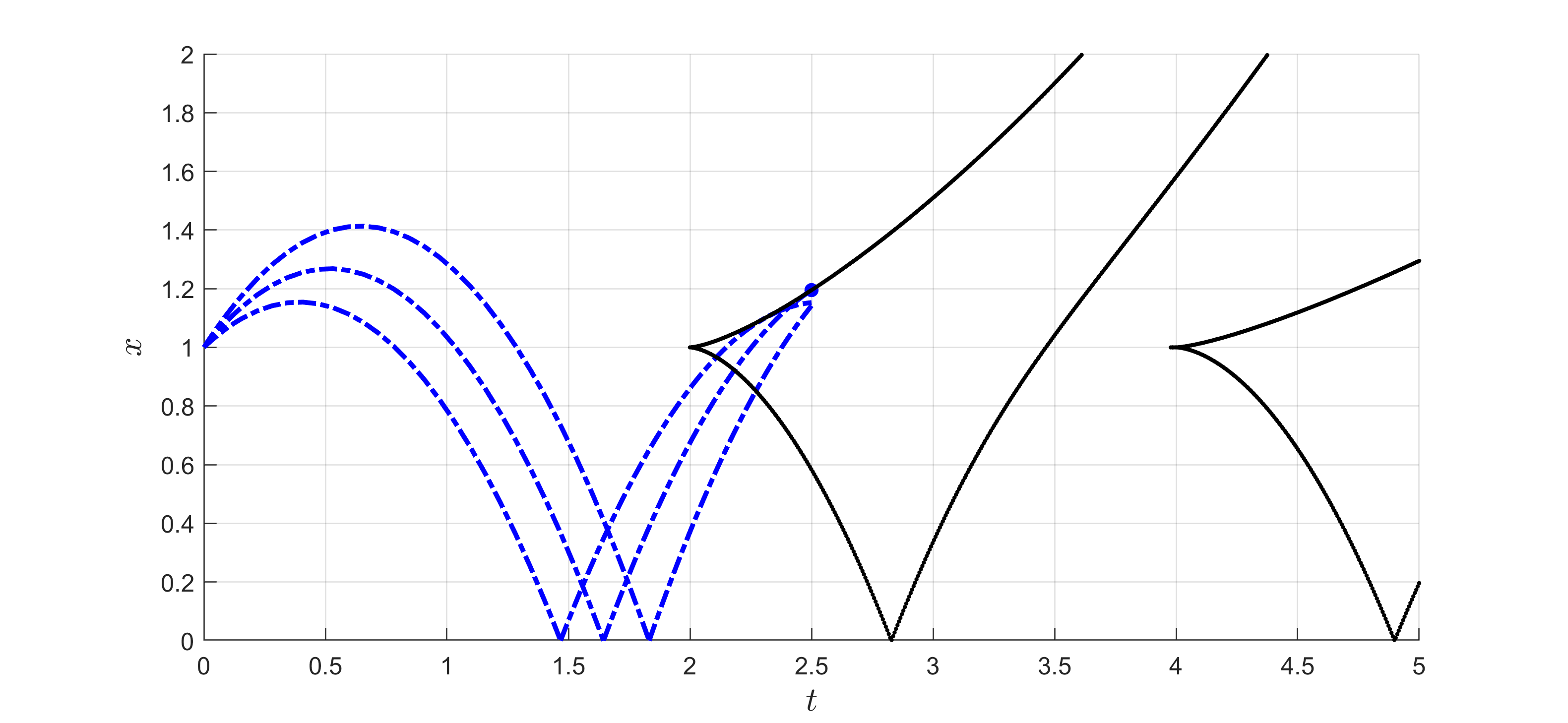}
		\caption{A graph of the caustic trajectory for the bouncing ball. The black curve shows the caustic trajectory from the bouncing ball against time. Conjugate points appear in pairs via the folding shown in Figure \ref{fig:bball_fold}. The blue dashed curves are distinct trajectories that accumulate on the caustic at $t = 2.5$ (to first-order).}
		\label{fig:envelope_traj}
	\end{figure}
	
\end{example}
\section{Irregular executions}\label{sec:Zeno}
The maximum principle above is only valid for regular hybrid arcs. Three phenomena that a non-regular hybrid arc can posses are beating, blocking, and Zeno.
\begin{definition}
	Let $\gamma:[t_0,t_f)\to M$ be a trajectory to the hybrid system $(M,S,X,\Delta)$ with $\mathcal{T} = \{t_k\}\subset [t_0,t_f)$ being the set of reset times. A reset point, $x=\gamma(t_k)$, is $n$-beating if 
	\begin{equation*}
		x,\ \Delta(x),\ \Delta^2(x),\ \ldots \ , \ \Delta^n(x)\in S, \quad \Delta^{n+1}(x)\not\in S,
	\end{equation*}
	and blocking if $\Delta^n(x)\in S$ for all $n\geq 0$. Finally, $\gamma$ is Zeno if the number of impacts $\#\mathcal{T}$ is $ \infty$ and $t_f<\infty$.
\end{definition}
Intuitively, a reset point is beating if there are a finite number of instantaneous impacts happening at the same time, and blocking if the trajectory never leaves the impact surface. In nonlinear systems (especially inelastic impact ones) Zeno can easily happen. While Zeno trajectories can have meaningful extensions past their breaking point \cite{Ames06isthere}, the same cannot be said for blocking states. As such, this section will deal only with the cases of beating and Zeno hybrid arcs.
\subsection{Beating executions}\label{subsec:beating}
Let $\mathcal{HC} = (M,\mathcal{U},\Sigma, f,\Delta)$ be a hybrid control system and let $\overline{\Delta}:\overline{\Sigma}\to M$ be its continuous extension.
Consider the nested sequence of sets:
\begin{equation*}
	\begin{split}
		\Sigma_0 &:= \overline{\Sigma}, \\
		\Sigma_1 &:= \overline{\Sigma}\cap \overline{\Delta}(\Sigma_0), \\
		\Sigma_k &:= \overline{\Sigma}\cap \overline{\Delta}(\Sigma_{k-1}), \\
		\Sigma_\infty &:= \bigcap_{k} \, \Sigma_k,
	\end{split}
\end{equation*}
i.e., $x\in\Sigma_k$ means that (up to closure) $\left\{ x, \Delta(x), \ldots, \Delta^k(x)\right\} \subset S$.
The sets $\Sigma_k$ are called the $k^{th}$-beating sets while $\Sigma_\infty$ is the blocking set. Let us impose the following regularity assumption:
\begin{assume}
	The sets $\Sigma_k$ are all embedded submanifolds (possibly with boundary) and $\Sigma_\infty = \emptyset$.
\end{assume}
Suppose that an optimal trajectory has a 1-beating point at time $t_1\in [t_0,t_f)$, $x(t_1)\in \Sigma_1\setminus \Sigma_2$. The co-state jump equation becomes
\begin{equation*}
	\begin{split}
		p^+ \circ \Delta_*^2 - p^- &\in \mathrm{Ann}(T\Sigma_1), \\
		H^+ - H^- &= 0,
	\end{split}
\end{equation*}
which is under-determined when
\begin{equation*}
	\dim \mathrm{Ann}(T\Sigma_1) = \mathrm{co}\dim \Sigma_1 \geq 2.
\end{equation*}
For concreteness, we will assume that $\mathrm{co}\dim \Sigma_1=2$.
This may be understood as an optimal control problem where the guard is a lower dimensional manifold as is the subject of \cite{pakniyat2015minimum} and can be handled with similar analysis as in \cite{cos_subm}.

Assuming sufficient regularity, denote the 1-dimensional manifold
\begin{equation*}
	\Pi_1 := \left\{ \eta\in\mathrm{Ann}(T\Sigma_1) : H\left( x, p\circ\Delta_*^2 + \eta \right) = H\left( \Delta^2(x), p\right) \right\},
\end{equation*}
i.e., all possible values for $p^-$. Assuming that this occurs at the first reset, the arc must satisfy the following boundary conditions:
\begin{equation}\label{eq:beating_boundary}
	\begin{array}{ll}
		x(t_0) = x_0, & x(t_1)^-\in\Sigma_1, \\
		p(t_0) \in T_{x_0}^*M, & p(t_1)^- \in \Pi_1.
	\end{array}
\end{equation}
These conditions are no longer under-determined and can be solved backward to obtain a unique solution. A similar procedure can be extended to the $k^{th}$-beating set via
\begin{equation*}
	\Pi_k := \left\{ \eta\in\mathrm{Ann}(T\Sigma_k) : H\left( x, p\circ\Delta_*^{k+1} + \eta \right) = H\left( \Delta^{k+1}(x), p\right) \right\}.
\end{equation*}

\subsection{Zeno executions}\label{subsec:Zeno}
There exist two fundamentally different classes of Zeno; one is essentially a disguised version of ``blow up to infinity'' while the other encapsulates ``true Zeno.''
\begin{definition}[Zeno types]
	Let $x\in M$ have a Zeno trajectory with resets occurring at $\{x_k\}_{k=1}^\infty$. The trajectory is spasmodic if the sequence $\{x_k\}_{k=1}^\infty$ escapes every compact set as $k\to\infty$. The trajectory is called steady if it is not spasmodic.
\end{definition}

If the state-space for a hybrid system is compact (or admits compact invariant sets), then the only Zeno trajectories that may occur are of steady type. To assist with steady Zeno, an additional regularity assumption is utilized for the reset map (which is satisfied for mechanical resets).

\begin{definition}[Boundary Identity Property]
	A regular HDS, $\mathcal{H}$, satisfies the boundary identity property if
	$\Delta$ continuously extends to the identity on the boundary of $S$ in $\Delta(S)$, i.e., if $\{s_k\}$ is a sequence in $S$ and $s_k\to s \in \overline{S}\cap\overline{\Delta(S)}$, then $\Delta(s_k)\to s$.
\end{definition}
The boundary identity property is key to extending solutions past the Zeno point in \cite{Ames06isthere}. Although it is important for $\Delta$ to have the boundary identity property, the actual map that requires this is the extended reset, $\tilde{\Delta}$. Additionally, when the boundary identity property holds, the first beating sets coincides with the blocking set, $S_1 = S_\infty$.
\begin{proposition}
	Suppose that the reset map in the regular hybrid control system possesses the boundary identity property. If \eqref{eq:controlled_impact} has a unique solution that extends continuously to $\overline{S}\cap\overline{\tilde{\Delta}(S)}$, then it also possesses the boundary identity property.
\end{proposition}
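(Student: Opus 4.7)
The plan is to lift the boundary identity of $\Delta$ from the base $M$ to the cotangent bundle $T^*M$ using the commutative diagram in \eqref{eq:controlled_impact} together with the hypothesized uniqueness and continuous extension of $\tilde{\Delta}$. Denote the continuous extension by $\overline{\tilde{\Delta}}$. The goal is to prove $\overline{\tilde{\Delta}}(s) = s$ for every $s \in \overline{S} \cap \overline{\tilde{\Delta}(S)}$.

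First I would handle the base projection. Fix $s \in \overline{S} \cap \overline{\tilde{\Delta}(S)}$ and pick $\{s_k\} \subset S$ with $s_k \to s$. Because $\pi_M(S) \subset \Sigma$ and $\pi_M(\tilde{\Delta}(S)) \subset \Delta(\Sigma)$, the projection $\pi_M(s)$ sits in $\overline{\Sigma}\cap\overline{\Delta(\Sigma)}$. Commutativity of \eqref{eq:controlled_impact} gives $\pi_M(\tilde{\Delta}(s_k)) = \Delta(\pi_M(s_k))$, and passing to the limit with the continuous extensions $\overline{\tilde{\Delta}}$ and $\overline{\Delta}$ yields $\pi_M(\overline{\tilde{\Delta}}(s)) = \overline{\Delta}(\pi_M(s))$. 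The boundary identity of $\Delta$ then forces $\overline{\Delta}(\pi_M(s)) = \pi_M(s)$, so $\overline{\tilde{\Delta}}(s)$ and $s$ share the same base point and differ at most in the cotangent fiber over $\pi_M(s)$.

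Second I would carry out the fiber argument. Writing $s = (x, p^-)$ and $\overline{\tilde{\Delta}}(s) = (x, p^+)$, the coordinate form of the corner conditions, $p^+ \circ \Delta_* = p^- + \varepsilon \cdot dh$ together with $H \circ \tilde{\Delta} = H$, passes to the limit by continuity of $\overline{\tilde{\Delta}}$. At the boundary point the base map $\overline{\Delta}$ acts as the identity, so the limiting corner equations reduce to $p^+ - p^- \in \mathrm{span}(dh_x)$ together with $H(x,p^+) = H(x,p^-)$; these are trivially satisfied by $p^+ = p^-$ with $\varepsilon = 0$. The uniqueness hypothesis for \eqref{eq:controlled_impact}, carried to the boundary by the continuous extension, rules out any competitor, hence $p^+ = p^-$ and $\overline{\tilde{\Delta}}(s) = s$.

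The main obstacle is making the limit of the corner conditions rigorous in Step 2, because $\Delta_*$ need not extend continuously to the boundary of $S$ and the associated linear system for $\varepsilon$ may degenerate there. The key point is that the continuous extension hypothesis supplies the existence of the limit $\overline{\tilde{\Delta}}(s)$, while uniqueness (combined with the fact that the identity on the fiber is always an admissible solution once $\overline{\Delta}$ is the identity on the base) pins it down. This last implication — that uniqueness on $S$ together with continuous extension forces uniqueness on $\overline{S}\cap\overline{\tilde{\Delta}(S)}$ — is the step most in need of care in a fully written proof.
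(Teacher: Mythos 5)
The paper states this proposition without providing a proof, so there is nothing to compare against approach-wise; judged on its own merits, your proposal has a genuine gap exactly where you flag it. Your Step 1 is fine: the commutative diagram in \eqref{eq:controlled_impact}, continuity of $\pi_M$, and the boundary identity property of $\Delta$ do force $\pi_M\bigl(\overline{\tilde{\Delta}}(s)\bigr)=\pi_M(s)$, so the disagreement can only be in the fiber. But Step 2 does not close. First, the reduction of the limiting corner conditions to $p^+-p^-\in\mathrm{span}(dh_x)$ needs the adjoint of $\Delta_*$ to tend to the identity along the approach, and knowing that $\overline{\Delta}$ equals the identity on $\overline{\Sigma}\cap\overline{\Delta(\Sigma)}$ (a set of codimension at least $2$) gives no control whatsoever on the limit of the differential $\Delta_*$ taken along $\Sigma$; it need not converge at all. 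Second, and more seriously, the final appeal to uniqueness is a non sequitur: the hypothesis is uniqueness of the solution of \eqref{eq:controlled_impact} \emph{on} $S$, not uniqueness of solutions of the degenerate limiting equations at a boundary point, and the fact that $p^+=p^-$, $\varepsilon=0$ satisfies those limiting equations does not single it out. Indeed, on $S$ itself $\varepsilon=0$ is always a root of the corner conditions yet is precisely \emph{not} the solution one selects (for mechanical impacts the selected solution is the reflection, the identity being excluded by the regularity requirement $\tilde{\Delta}(S)\cap S=\emptyset$), so "the identity is admissible, hence uniqueness pins the extension to it" cannot be the mechanism.

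What actually has to be shown is that the multipliers $\varepsilon_k$ associated with $\tilde{\Delta}(s_k)$ tend to $0$ as $s_k\to s\in\overline{S}\cap\overline{\tilde{\Delta}(S)}$: at such boundary points the transversality $\pi_M^*dh(X_H)>0$ defining $S$ in \eqref{eq:extended_guard} degenerates, so $\varepsilon=0$ becomes a multiple root of the energy equation $H\bigl(x,p^-\circ(\Delta_*)^{-1}+\varepsilon\,dh\bigr)=H$ and the nontrivial branch selected on $S$ merges with the identity branch; combined with the hypothesized continuity of the extension this yields $\overline{\tilde{\Delta}}(s)=s$. Your writeup gestures at this ("the associated linear system for $\varepsilon$ may degenerate there") and explicitly defers the key implication to "a fully written proof," but that deferred step is the entire content of the proposition, so as it stands the argument is incomplete.
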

For a hybrid system whose reset obeys the boundary identity property, (steady) Zeno requires dissipation/contraction. Volume-preservation becomes a convenient obstruction.

\begin{theorem}[\cite{clark2023invariant}]\label{thm:Zeno}
	Suppose that $\mathcal{H}$ is a smooth compact regular hybrid dynamical system whose reset obeys the boundary identity property. 
	Let $\mathcal{N}\subset M$ be the set of all points that have a (steady) Zeno trajectory. 
	If $\mathcal{H}$ preserves a smooth measure, $\mu$, then $\mu(\mathcal{N})=0$, i.e., Zeno almost never occurs.
\end{theorem}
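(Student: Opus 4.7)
The plan is to localize Zeno trajectories to a measure-zero set and then propagate the negligibility to the entire Zeno basin via volume preservation.

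First, I would identify where Zeno trajectories accumulate. For $x \in \mathcal{N}$ with reset sequence $\{x_k\}_{k=1}^\infty \subset S$ and Zeno time $t_f(x) < \infty$, steadiness together with compactness of $M$ extracts a convergent subsequence $x_{k_j} \to x_\infty \in \overline{S}$. The condition $\sum_k (t_{k+1} - t_k) \leq t_f(x) < \infty$ forces $t_{k+1} - t_k \to 0$; together with the boundary identity property and continuity of the continuous flow, this forces $\Delta(x_{k_j}) \to x_\infty$ as well, placing $x_\infty \in \overline{\Delta(S)}$. Hence every Zeno accumulation point lies in $Z := \overline{S} \cap \overline{\Delta(S)}$, and condition~(5) of Definition~\ref{def:HDS} gives $\mathrm{codim}\, Z \geq 2$, so $\mu(Z) = 0$ for any smooth measure.

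Second, I would reduce the claim to a volume estimate. Decompose $\mathcal{N} = \bigcup_{N \in \mathbb{N}} \mathcal{N}_N$ with $\mathcal{N}_N := \{x \in \mathcal{N} : t_f(x) \leq N\}$; by countable subadditivity it suffices to show $\mu(\mathcal{N}_N) = 0$ for each $N$. Letting $U_\epsilon := \{y \in M : d(y, Z) < \epsilon\}$, the first paragraph guarantees that every trajectory starting in $\mathcal{N}_N$ enters $U_\epsilon$ at some time in $[0, N]$, so
\[
\mathcal{N}_N \subset W_{N,\epsilon} := \bigl\{x \in M : \varphi^\mathcal{H}(t, x) \in U_\epsilon \text{ for some } t \in [0, N]\bigr\}.
\]
The theorem thus reduces to showing $\lim_{\epsilon \to 0^+} \mu(W_{N, \epsilon}) = 0$ for each fixed $N$.

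Third, for the volume estimate I would exploit that the hybrid flow preserves $\mu$ (Theorem~\ref{thm:inv_form}, applied to $\mu$ as a top-degree invariant form). A flow-box argument on each continuous segment of the hybrid flow, combined with the boundary identity property controlling the behavior of $\tilde{\Delta}$ near $Z$, yields $\mu(W_{N,\epsilon}) \leq C_N \cdot \epsilon$ for a constant depending only on $N$ and on the Lipschitz data of $X$ and $\Delta$. Intuitively, $W_{N,\epsilon}$ is an $\epsilon$-tubular neighborhood of the swept set $\bigcup_{t \in [0,N]}(\varphi^\mathcal{H}_t)^{-1}(Z)$, which has codimension at least $\mathrm{codim}\, Z - 1 \geq 1$ (the flow raises dimension by at most one), so its $\epsilon$-tube has measure $O(\epsilon)$. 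Letting $\epsilon \to 0$ gives $\mu(\mathcal{N}_N) = 0$ for every $N$, and $\mu(\mathcal{N}) = 0$ follows.

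The hardest step is the flow-box estimate in the third paragraph. In the purely continuous setting it is classical, but the hybrid generalization must handle that $\epsilon$-tubes around $Z$ are cut by the guards $S$ and $\Delta(S)$ and that $\tilde{\Delta}$ could in principle distort these neighborhoods across resets. The boundary identity property is essential here: it guarantees that $\tilde{\Delta}$ extends continuously to the identity on $\overline{S} \cap \overline{\Delta(S)}$, so small neighborhoods of $Z$ are mapped to small neighborhoods of $Z$ by the hybrid flow uniformly on compact time intervals, preserving the $O(\epsilon)$ bound.
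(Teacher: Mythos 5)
First, note that the paper does not prove this theorem at all: it is quoted from \cite{clark2023invariant}, so your argument can only be measured against the cited proof, which works quite differently (it passes to the guard with the induced flux measure and the first-return/impact map and uses a recurrence-style consequence of the finite invariant measure, rather than a tube estimate in $M$). Your first two paragraphs are essentially sound: steady Zeno impact points do accumulate on $Z=\overline{S}\cap\overline{\Delta(S)}$ (though the correct reason that $x_\infty\in\overline{\Delta(S)}$ is that $d\bigl(\Delta(x_{k_j-1}),x_{k_j}\bigr)\le \sup\|X\|\,(t_{k_j}-t_{k_j-1})\to 0$ on the compact $M$, not the boundary identity property), and the reduction to $\lim_{\epsilon\to 0}\mu(W_{N,\epsilon})=0$ is automatic by monotone continuity of $\mu$.

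The genuine gap is the third step. The claimed bound $\mu(W_{N,\epsilon})\le C_N\,\epsilon$ with $C_N$ depending ``only on $N$ and the Lipschitz data of $X$ and $\Delta$'' is false as stated, and the heuristic behind it does not survive the hybrid setting: points of $W_{N,\epsilon}$ may undergo arbitrarily many resets before time $N$ (this is exactly what happens near $\mathcal{N}_N$), so the backward Lipschitz constant of $\varphi^{\mathcal{H}}_t$ is a product of an unbounded number of saltation factors and there is no uniform flow-box control; consequently $W_{N,\epsilon}$ is not contained in an $O(\epsilon)$-tube around the swept set $\bigcup_{t\in[0,N]}(\varphi^{\mathcal{H}}_t)^{-1}(Z)$, which itself only accounts for orbits that literally hit $Z$ after finitely many resets, not Zeno orbits that merely accumulate on it. The dissipative bouncing ball \eqref{eq:bball_zeno} (restricted to a compact energy sublevel) shows the estimate cannot follow from smoothness, compactness, regularity and the boundary identity property alone: there $Z$ is the single corner point, all your Lipschitz data are fine, yet $W_{N,\epsilon}$ contains an open set of Zeno initial conditions with Zeno time below $N$ and its measure does not tend to $0$. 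Hence the invariant measure must enter the estimate itself, not merely be mentioned; the standard repairs are either the recurrence-type argument (the set of points whose orbit is eventually trapped in $U_\epsilon$ is contained in an increasing union of preimages of a forward-invariant subset of $U_\epsilon$, so by invariance its measure is at most $\mu(U_\epsilon)$), or, as in \cite{clark2023invariant}, an argument on $S$ with the induced measure $i_X\mu$ preserved by the impact/return map, showing the set of guard points whose iterates converge to the codimension-two set $Z$ has induced measure zero. Without one of these, your Step 3 asserts essentially the conclusion of the theorem.
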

A key reason why the hybrid maximum principle breaks down in the presence of Zeno is due to the degeneracy of the resulting variational/adjoint equations, recall \eqref{eq:hybridVE} and \eqref{eq:hybridAE}. This calculation is demonstrated explicitly for the bouncing ball below.
\begin{example}[Bouncing ball cont.]\label{ex:bouncing_ball_zeno}
	By introducing damping to the reset map \eqref{eq:bball_impact_oscillating}, it is well-known that the trajectories are Zeno. The hybrid maximum principle, Theorem \ref{thm:HPMP}, fails in this setting as state-transition matrix becomes singular which invalidates the hybrid maximum principle \cite{optZeno}. This is explicitly computable in this example; see Proposition \ref{prop:ker_zeno} below.
	
	Let $0<c<1$ and consider the dynamics on $(x,y)\in T^*\mathbb{R}^+$
	\begin{equation}\label{eq:bball_zeno}
		\begin{cases}
			\dot{x} = \dfrac{1}{m}y, \quad \dot{y} = -mg, & x>0, \\
			x\mapsto x, \quad y\mapsto -c^2y, & x=0, \ y<0.
		\end{cases}
	\end{equation}
	A hybrid arc with the initial conditions $x(0)=0$ and $y(0)=y_0>0$ is Zeno with limiting time
	\begin{equation}\label{eq:zeno_time_series}
		t_Z = \frac{2y_0}{mg}\left[ 1 + c^2 + c^4 + \ldots \right] = \frac{2y_0}{mg}\frac{1}{1-c^2}.
	\end{equation}
	Let $z = (x,y)\in T^*\mathbb{R}^+$, then the hybrid variational equation \eqref{eq:hybrid_ve} takes the form
	\begin{equation}\label{eq:bball_variational}
		\begin{cases}
			\delta \dot{z} = A\delta z, & t\not\in\mathcal{T}, \\
			\delta z^+ = J(y(t))\delta z^-, & t\in\mathcal{T},
		\end{cases}
	\end{equation}
	where
	\begin{equation*}
		A = \begin{bmatrix}
			0 & \dfrac{1}{m} \\ 0 & 0 
		\end{bmatrix}, \quad J(y) = \begin{bmatrix}
			-c^2 & 0 \\
			-\dfrac{m^2g}{y}\left( 1+c^2\right) & -c^2
		\end{bmatrix},
	\end{equation*}
	and the event times are the partial sums of the Zeno time,
	\begin{equation*}
		\mathcal{T} = \left\{ \frac{2y_0}{mg}\left( \frac{1-c^{2k}}{1-c^2}\right) : k\geq 0 \right\}.
	\end{equation*}
	If $\Phi(t)$ denotes the state-transition matrix of \eqref{eq:bball_variational}, we have
	\begin{equation*}
		\begin{split}
			\Phi_Z := \lim_{t\nearrow t_Z}, \, \Phi(t) &= \prod_{k=1}^{\substack{\leftarrow \\ \infty}} \,
			J(-c^{2k}y_0)
			\exp\left( A \cdot \left( \frac{2y_0c^{2k}}{mg}\right) \right) 
			= \begin{bmatrix}
				0 & 0 \\
				\dfrac{m^2g}{y_0}\left(\dfrac{1+c^2}{1-c^2}\right) & \dfrac{2}{1-c^2}
			\end{bmatrix},
		\end{split}
	\end{equation*}
	where the product multiplies successive terms on the left. It is clear to see that $\det\Phi_Z=0$, which invalidates the co-state equations required for Theorem \ref{thm:HPMP}.
	\begin{proposition}\label{prop:ker_zeno}
		Let $\zeta:(x_0,y_0)\mapsto t_Z$ be the map that assigns the Zeno time to the initial conditions for the hybrid system \eqref{eq:bball_zeno}. Then, $\ker d\zeta = \ker \Phi_Z$.
	\end{proposition}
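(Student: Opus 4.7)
My plan is to derive a closed form for $\zeta$, differentiate at the reference initial condition $(0,y_0)$, and compare the result directly with the single nonzero row of $\Phi_Z$. To build $\zeta(x_0,y_0)$ for an airborne initial condition, I would first integrate the continuous dynamics \eqref{eq:bball_zeno} until the ball strikes $\{x=0\}$: solving the quadratic $x_0 + \frac{y_0}{m}t - \frac{g}{2}t^2 = 0$ for the positive root gives first-impact time $t_1 = (y_0 + v_1)/(mg)$, with $v_1 := \sqrt{y_0^2+2m^2gx_0}$. Immediately after the reset the ball has upward momentum $c^2 v_1$, and the remaining Zeno time is precisely the series in \eqref{eq:zeno_time_series} with $y_0$ replaced by $c^2 v_1$. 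Summing the geometric series $\frac{2}{mg}\sum_{k\ge 1}c^{2k}v_1$ and adding $t_1$ yields
\begin{equation*}
\zeta(x_0,y_0) = \frac{1}{mg}\left(y_0 + \frac{1+c^2}{1-c^2}\sqrt{y_0^2+2m^2gx_0}\right).
\end{equation*}

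Differentiating at the reference point $(0,y_0)$, where $v_1 = y_0$, gives a row vector $d\zeta|_{(0,y_0)}$ whose entries are, by inspection, exactly $1/(mg)$ times the entries of the (nonzero) second row of $\Phi_Z$ computed just above. Because the first row of $\Phi_Z$ vanishes, $\ker \Phi_Z$ coincides with the kernel of its second row, and the proportionality of the two linear functionals then forces $\ker \Phi_Z = \ker d\zeta$.

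The main obstacle is simply bookkeeping in the first step: one must track the post-reset momentum correctly (it is $c^2 v_1$, not $c v_1$, because of the squared coefficient in the reset law of \eqref{eq:bball_zeno}) and recognize that the post-impact sub-execution is again of the form analyzed in \eqref{eq:zeno_time_series}. A useful internal check is that along the continuous flow one should have $d\zeta\cdot X_H = -1$, reflecting the fact that $\zeta$ is a ``time-until-Zeno'' function; this linearly constrains the two partial derivatives and catches arithmetic slips. Geometrically, the rank-one structure of $\Phi_Z$ together with the one-dimensional character of $\ker d\zeta$ in $\mathbb{R}^2$ means that verifying proportionality of a single pair of functionals is all that is required — the heavy lifting is entirely in producing the closed form for $\zeta$.
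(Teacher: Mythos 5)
Your proposal is correct and follows essentially the same route as the paper: both derive the closed form $\zeta(x_0,y_0)=\frac{1}{mg}\bigl(y_0+\tfrac{1+c^2}{1-c^2}\sqrt{y_0^2+2m^2gx_0}\bigr)$ (time to first impact plus the geometric series) and differentiate at $(0,y_0)$. The only cosmetic difference is the final linear-algebra step: the paper exhibits the spanning vector $v=[-2y_0,\ m^2g(1+c^2)]^\top$ of $\ker\Phi_Z$ and checks $d\zeta\cdot v=0$, whereas you note that $d\zeta_{(0,y_0)}$ is $\tfrac{1}{mg}$ times the unique nonzero row of $\Phi_Z$ — equivalent conclusions, since both kernels are one-dimensional.
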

	\begin{proof}
		A vector that spans $\ker \Phi_Z$ is 
		\begin{equation*}
			v = \left[ -2y_0, \ m^2g(1+c^2) \right]^\top.
		\end{equation*}
		The Zeno time is 
		\begin{equation*}
			\zeta(x_0,y_0) = \underbrace{\frac{1}{mg}\left(y_0 + \sqrt{y_0^2+2m^2gx_0}\right)}_{\text{time to first impact}} + \underbrace{\frac{2c^2}{mg}\frac{1}{1-c^2}\sqrt{y_0^2+2m^2gx_0}}_{\text{geometric series from \eqref{eq:zeno_time_series}}}.
		\end{equation*}
		Its differential evaluated at $x_0=0$ and $y_0>0$ is
		\begin{equation*}
			d\zeta_{(0,y_0)} = \frac{m(1+c^2)}{y_0(1-c^2)} \ dx_0 +
			\frac{2}{mg(1-c^2)} \ dy_0.
		\end{equation*}
		It follows that $d\zeta_{(0,y_0)}\cdot v = 0$.
	\end{proof}
\end{example}
\section{Application: synchronization for spiking neurons}\label{sec:example}
A system of $n$ neurons can be described using the leaky integrate and fire model \cite{model_neurons}, \cite{neuralbook}:
\begin{equation}\label{eq:neurons_voltage}
	\begin{cases}
		\dot{v}_i =  - v_i + RI_i(t), & v_i < \eta \\
		v_i^+ = v_r, & v_i = \eta
	\end{cases} 
\end{equation} 
where
\begin{itemize}
	\item $v_i$ describes the membrane potential of each neuron, $i=1,\ldots, n$,
	\item $v_r$ is the resting potential of the neuron, which is the charge the neuron would have if it were isolated, 
	\item $I_i = \sum_{j=1}^n \, w_{ij} \alpha_j(t-t^f)+I^{ext}(t) + I_0$ is the current entering each neuron with $I^{ext}$ being the external input current, $w_{ij} < 1$,
	\item $R$ is the membrane conductivity,
	\item $RI_0$ is the natural accumulation of charge inherent to each neuron as a consequence of membrane polarization,
	\item $w_{ij}$ is the strength of the neuron $i$ to neuron $j$ synapse, and
	\item $\alpha_j(t-t^f)$ is a function providing the remaining current from neuron $j$ flowing into neuron $i$ after time $t$, with $\alpha_j(t)=0$ when $t>0$.
	
\end{itemize}
For the sake of simplicity, we will assume that $\alpha\equiv0$, i.e., there is no lag in the synaptic connection and all the current is transmitted during the synapse. Assume, moreover, that the resting potential of each neuron is $v_r=0$ and that the membrane conductivity is $R = 1$. 

Suppose there are $n=2$ neurons with potentials $v_1$ and $v_2$, and decompose the guard via
\begin{equation}\label{eq:decomposed_guard}
	\Sigma = \Sigma^1\cup \Sigma^2 = \left\{(\eta,v_2) : 0\leq v_2 \leq \eta \right\}
	\cup \left\{(v_1,\eta) : 0\leq v_1 \leq \eta \right\}.
\end{equation}
As a discharge is triggered when either neuron reaches its threshold potential, the above decomposition reflects which neuron was fired.  When this happens, the triggering neuron resets its potential to $v_r=0$ and the other receives a sudden increase in its potential proportional to the coupling strength.

Suppose that the control is $u = I^{ext}$, which is only applied to the first neuron. The hybrid control system has the data
\begin{enumerate}
	\item $M = [0,\eta]\times[0,\eta]$,
	\item $\Sigma = \Sigma^1\cup \Sigma^2$ as in \eqref{eq:decomposed_guard},
	\item $\mathcal{U} = \mathbb{R}$ as admissible inputs are unbounded,
	\item $f:M\times\mathcal{U}\to TM$ is given by
	\begin{equation}\label{eq:eom_neurons}
		\begin{split}
			\dot{v}_1 &= -v_1 + I_0 + u, \\
			\dot{v}_2 &= -v_2 + I_0,
		\end{split}
	\end{equation}
	\item $\Delta:\Sigma\to M$ is given by
	\begin{equation}\label{eq:neuron_reset}
		\Delta(v_1,v_2) = \begin{cases}
			(0, w_{12}v_1+v_2), & (v_1,v_2)\in \Sigma^1, \\
			(v_1 + w_{12}v_2, 0), & (v_1,v_2) \in \Sigma^2.
		\end{cases}
	\end{equation}
\end{enumerate}
\begin{remark}
	The system has qualitatively different behaviors depending on whether or not $I_0$ is greater or less than $\eta$, as demonstrated in Figure \ref{fig:neuron_phase}. It will be assumed that $I_0>\eta$.
\end{remark}
\begin{remark}
	There are two issues with the reset map as given by \eqref{eq:neuron_reset}. (a) The first is that $\Sigma^1\cap\Sigma^2 = \{(\eta,\eta)\}\ne\emptyset$ and it does not get mapped to a consistent point under $\Delta$. To resolve this, the point $(\eta,\eta)$ will be taken to be in $\Sigma^1$ and not $\Sigma^2$. This will result in the reset being discontinuous. (b) The second issue is when a reset occurs in the sets
	\begin{equation*}
		\Sigma^1_1 := \{(\eta,v_2) : \eta-w_{12}\eta \leq v_2 \leq \eta\}, \quad
		\Sigma^2_1 := \{(v_1,\eta) : \eta-w_{12}\eta \leq v_1 \leq \eta\},
	\end{equation*}
	as a reset would result in the other neuron having potential greater than $\eta$. As such, a second reset could immediately occur (beating). Addressing both of these issues results in the correct reset map
	\begin{equation}\label{eq:corrected_neuron_reset}
		\Delta(v_1,v_2) = \begin{cases}
			(0, w_{12}\eta + v_2), & v_1 = \eta, \quad \, 0\leq v_2 < \eta-w_{12}\eta, \\
			(w_{12}\eta, 0), & v_1 = \eta, \quad \eta - w_{12}\eta < v_2 \leq \eta, \\
			(v_1+w_{12}\eta, 0), & 0 \leq v_1 < \eta-w_{12}\eta, \quad v_2=\eta, \\
			(0,w_{12}\eta), & \eta-w_{12}\eta \leq v_1 < \eta, \quad v_2 = \eta.
		\end{cases}
	\end{equation}
	Hence, there are 4 different types of impacts (see Figure \ref{fig:neuron_phase} for a schematic drawing):
	\begin{enumerate}
		\item No beating, first neuron is fired: $v_1 = \eta$ and $v_2 < \eta - \eta w_{12}$, i.e., when $(v_1, v_2) \in \Sigma_0^1 := \Sigma^1 -\Sigma_1^1 = \{\eta\} \times [0, \eta(1 - w_{12})]$. The image under the reset map is: $\Delta(\Sigma_0^1) = \{0\}\times [w_{12}\eta, \eta]$.
		\item No beating, second neuron is fired: $v_2 = \eta, v_1 < \eta - \eta w_{12}$, i.e., when $(v_1, v_2) \in \Sigma_0^2 := \Sigma^2 - \Sigma_1^2 = [0, \eta ( 1 - w_{12})]\times \{\eta\}$. The image under the reset map is : $\Delta(\Sigma_0^1) = [w_{12}\eta, \eta]\times \{0\}$.
		\item Beating, first neuron is fired, and an instantaneous spike is induced in the second neuron, i.e., $(v_1, v_2) \in \Sigma_1^1$. Then, image under the reset map is a single point $\{(w_{12}\eta, 0)\}$.
		\item Beating, the second neuron is fired, and an instantaneous spike is induced in the first neuron, i.e., $(v_1, v_2) \in \Sigma_1^2$. Then, image under the reset map is a single point $\{(0, w_{12}\eta)\}$.
	\end{enumerate}
	
\end{remark}
\begin{figure}[H]
	\centering
	\begin{subfigure}{0.45\textwidth}
		\begin{tikzpicture}
			\draw[->, thick] (0,0) -- (6,0);
			\draw[->, thick] (0,0) -- (0,6);
			\node[below right] at (6,0) {$v_1$};
			\node[above left] at (0,6) {$v_2$};
			\foreach \x in {0, 1, 2, 3, 4, 5, 6}{
				\foreach \y in {0, 1, 2, 3, 4, 5, 6}{
					\pgfmathsetmacro{\vx}{-\x + 3}
					\pgfmathsetmacro{\vy}{-\y + 3}
					\draw[->, red] (\x,\y) -- (\x+\vx/5, \y+\vy/5);
				}
			}
			\draw[blue, thick] (4,0) -- (4,6);
			\draw[blue, thick] (0,4) -- (6,4);
			\draw[fill] (3,3) circle [radius=0.1];
		\end{tikzpicture}
		\caption{$I_0<\eta$.}
	\end{subfigure}
	\begin{subfigure}{0.45\textwidth}
		\begin{tikzpicture}
			\draw[->, thick] (0,0) -- (6,0);
			\draw[->, thick] (0,0) -- (0,6);
			\node[below right] at (6,0) {$v_1$};
			\node[above left] at (0,6) {$v_2$};
			\foreach \x in {0, 1, 2, 3, 4, 5, 6}{
				\foreach \y in {0, 1, 2, 3, 4, 5, 6}{
					\pgfmathsetmacro{\vx}{-\x + 5}
					\pgfmathsetmacro{\vy}{-\y + 5}
					\draw[->, red] (\x,\y) -- (\x+\vx/5, \y+\vy/5);
				}
			}
			\draw[blue, thick] (4,0) -- (4,6);
			\draw[blue, thick] (0,4) -- (6,4);
			\draw[fill] (5,5) circle [radius=0.1];
		\end{tikzpicture}
		\caption{$I_0>\eta$.}
	\end{subfigure}
	\caption{Qualitatively distinct cases for the system of two spiking neurons depending on whether $I_0>\eta$ or $I_0<\eta$. }
	\label{fig:neuron_phase}
\end{figure}
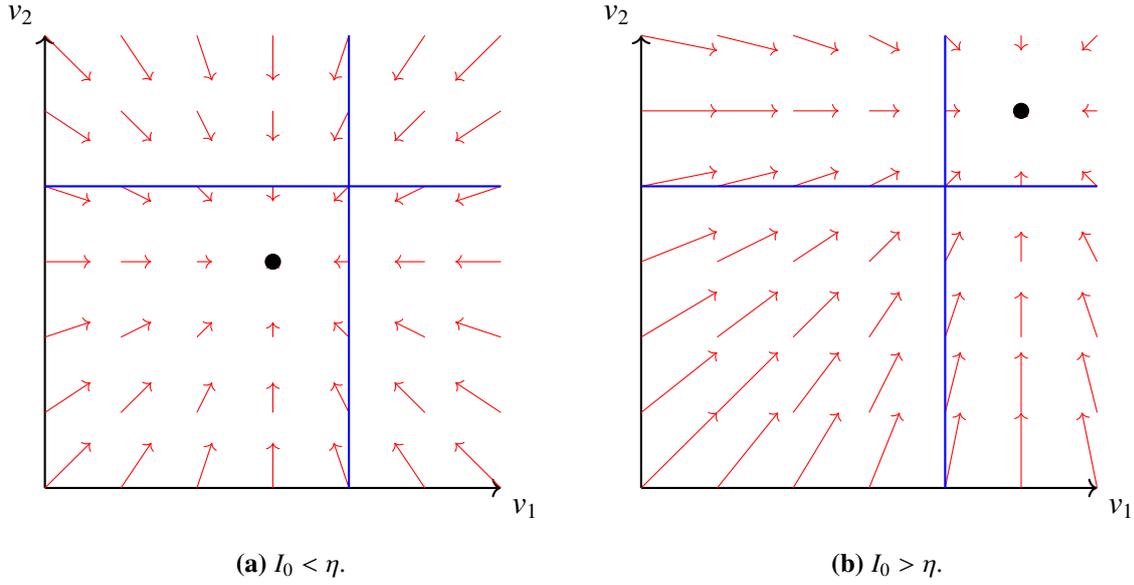
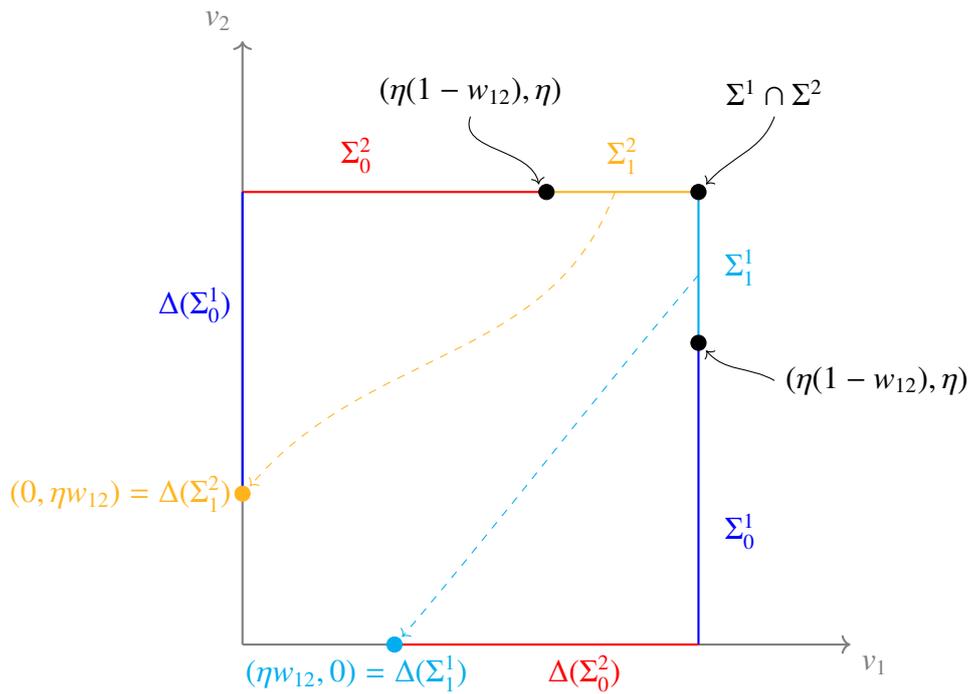
\begin{figure}[H]
	\centering
	\begin{tikzpicture}
		\draw[->, thick, gray] (0,0) -- (8,0);
		\draw[->, thick, gray] (0,0) -- (0,8);
		\node[below right, gray] at (8,0) {$v_1$};
		\node[above left, gray] at (0,8) {$v_2$};
		\draw[red, thick] (0,6) -- (4,6);
		\draw[myorange, thick] (4, 6) -- (6, 6);
		\draw[blue, thick] (6,0) -- (6,4);
		\draw[cyan, thick] (6,4) -- (6,6);
		
		\node[above, black] at (3,7) {$(\eta( 1- w_{12}), \eta) $};
		
		\node[right, blue] at (6.2,1.5) {$\Sigma_0^1 $};
		\node[above, red] at (1.5, 6.1) {$\Sigma_0^2$};
		
		\draw[fill] (6,6) circle [radius=0.1];
		\draw[->] (7,7) to [in=50,out=250] (6.1,6.1);
		\draw[->] (3,7) to [in=100,out=250] (3.9,6.1);
		\node[above] at (7,7) {$\Sigma^1\cap\Sigma^2$};
		\draw[fill, black] (6,4) circle [radius=0.1];
		\draw[fill, black] (4,6) circle [radius=0.1];
		
		\draw[->] (7,3.5) to [in=310,out=150] (6.1,3.9);
		\node[right] at (7, 3.5) {$(\eta(1 - w_{12}), \eta)$};
		
		\node[cyan, right] at (6.2,5) {$\Sigma^1_1$};
		\node[myorange, above] at (5,6.1) {$\Sigma_1^2$};
		
		
		\draw[blue, thick] (0, 2) -- (0, 6);
		\draw[red, thick] (2, 0) --(6, 0);
		
		\draw[fill, myorange] (0,2) circle [radius=0.1];
		\draw[fill, cyan] (2,0) circle [radius=0.1];
		
		\draw[->, dashed,cyan] (6,4.9) to [in=50,out=230] (2.1,0.1);
		\draw[->, dashed, myorange ] (4.9,6) to [in=50,out=250] (0.1,2.1);
		\node[left, myorange] at (0, 2) {$(0, \eta w_{12}) = \Delta(\Sigma_1^2)$};
		\node[below, cyan] at (1.5, 0) {$(\eta w_{12}, 0) = \Delta(\Sigma_1^1)$};
		\node[left, blue] at (0, 4.5) {$\Delta(\Sigma_0^1)$};
		\node[below, red] at (4.5, 0) {$\Delta(\Sigma_0^2)$};
	\end{tikzpicture}
	\caption{Schematic drawing of the different types of resets for the spiking neuron model. Red and blue denote the domain and codomain for no beating resets in neurons 1 and 2, whereas orange and cyan denote the domain and codomain of beating resets in neurons 1 and 2, respectively. Note that both beating sets $\Sigma_1^2 = [\eta(1 - w_{12}), \eta]\times \{\eta\}$ and $\Sigma_1^2 = \{\eta\}\times [\eta(1 - w_{12}), \eta]$ get mapped to a single point $\{(0, \eta w_{12})\} = \Delta(\Sigma_1^2)$ and  $\{(\eta w_{12}, 0)\} = \Delta(\Sigma_1^2)$, respectively.}
	\label{fig:neuron_guards}
\end{figure}

There is extensive evidence in the medical literature \cite{medical1, medical2, medical3} that correlates pathological tremor for patients with neurological disorders such as Parkinson and epilepsy with abnormal synchronization of neuron spiking. This motivates our cost function to be: 
$$J = \int_{t_0}^{t_1} \frac{1}{2} u(t)^2  - 2( v_1 (t)-  v_2(t))^2 dt$$ 
where the goal is to prevent synchronization between the two neurons, at the minimal possible external input. For the sake of simplicity, we measure synchronization by the difference between the potentials of each neuron. Other methods using mutual information, cross correlation and  phase relationships \cite{synchronization} have been proposed in literature, and an in-depth analysis of these methods in the context of control is reserved to future work. 

Hence, the optimal control problem we would like to solve is:
\begin{equation}
	J^*(v_0) := \min_{u(\cdot)} J(u(\cdot),v(t)) \text{ subject to }\begin{cases}
		\eqref{eq:eom_neurons}, \ \eqref{eq:corrected_neuron_reset}, \\
		(v_1(0), v_2(0)) = v_0.
	\end{cases} 
\end{equation}

There are two ways of numerically solving this problem. The first one is using dynamic programming \cite{DP_hybrid}. A numerical implementation of dynamic programming with discretization details given in  Table \ref{tab:neuron_dp_disc} produces the results shown in Figure \ref{fig:neuron_dp}. \begin{table}
	\centering
	\begin{tabular}{c|c}
		Variable & Range \\ \hline
		$v_1$ & \texttt{linspace}(0,1,150) \\
		$v_2$ & \texttt{linspace}(0,1,150) \\
		$u$ & \texttt{linspace}(-2,2,150) \\
		$t$ & \texttt{linspace}(0,1,250)
	\end{tabular}
	\caption{Discretizations for dynamic programming of the 2 neuron model.}
	\label{tab:neuron_dp_disc}
\end{table}
\begin{figure}
	\centering
	\begin{subfigure}{0.45\textwidth}
		\includegraphics[width=\textwidth]{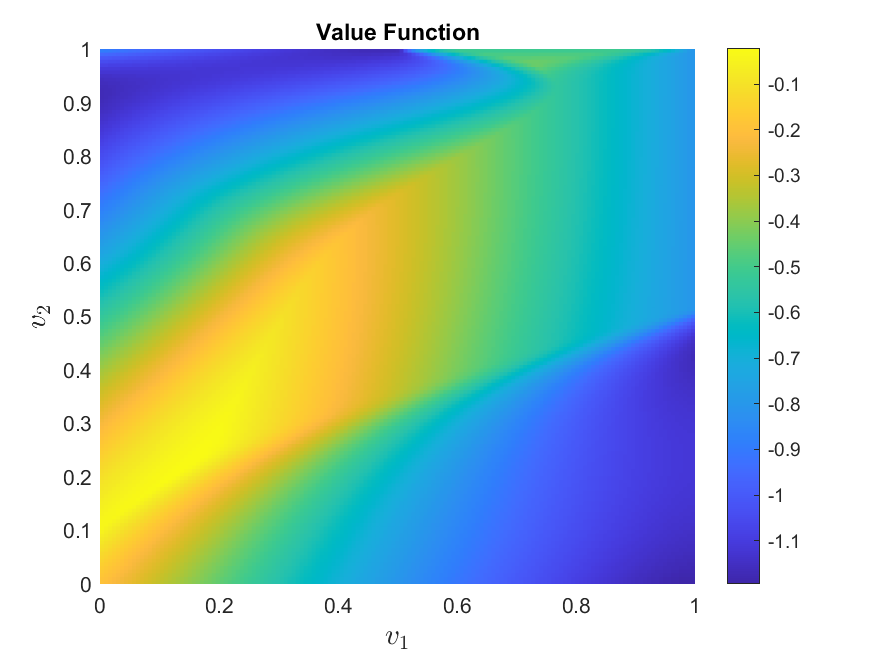}
		\caption{Computed value function \`{a} la DP}.
	\end{subfigure}
	\hfill
	\begin{subfigure}{0.45\textwidth}
		\includegraphics[width=\textwidth]{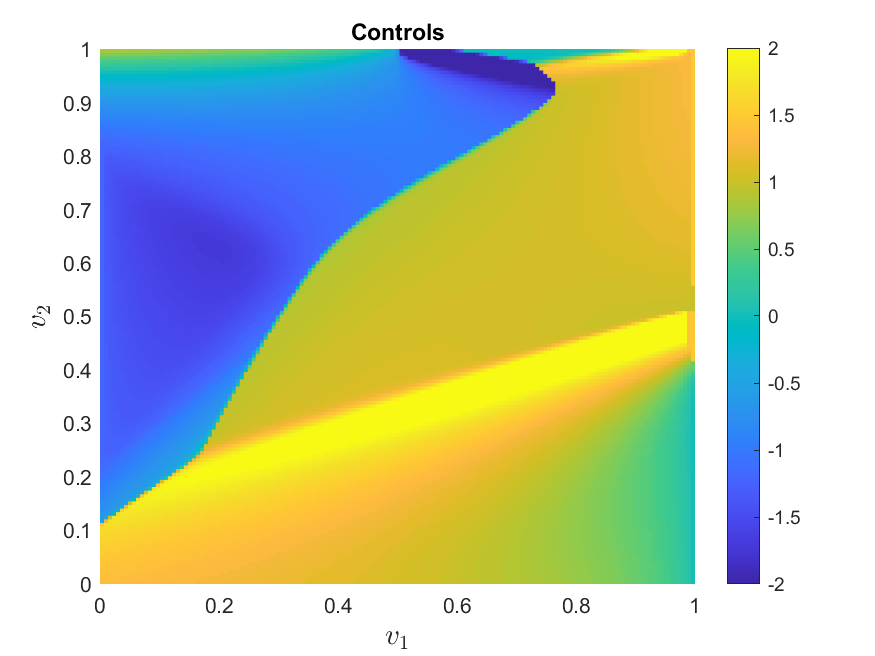}
		\caption{The value of the optimal control, $u^*(t=0; v_1,v_2)$.}
	\end{subfigure}
	\hfill
	\begin{subfigure}{0.45\textwidth}
		\includegraphics[width=\textwidth]{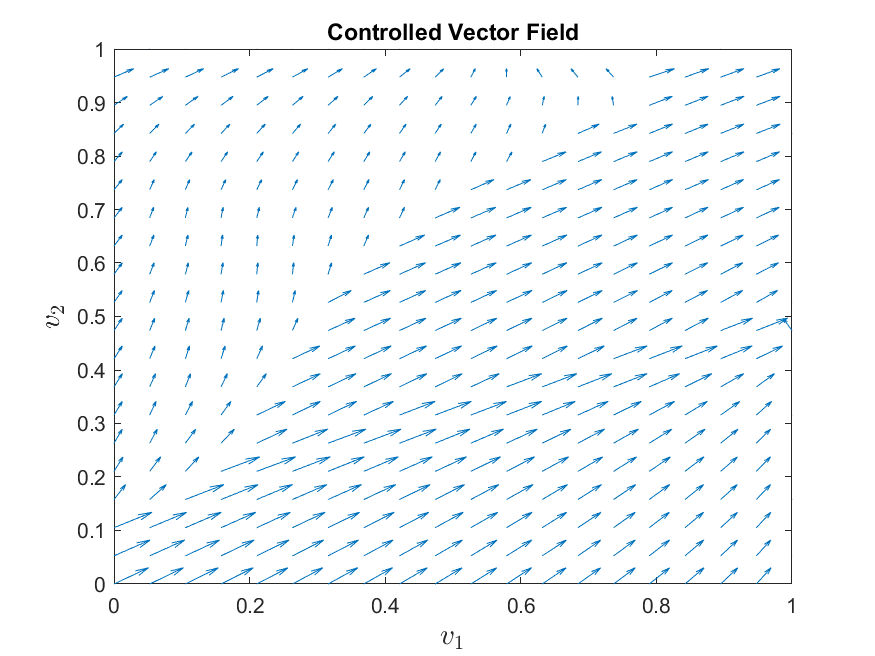}
		\caption{The vector-field with the optimal control at the initial time, $f(v_1,v_2;u^*(t=0, v_1,v_2)$.}
	\end{subfigure}
	\hfill
	\begin{subfigure}{0.45\textwidth}
		\includegraphics[width=\textwidth]{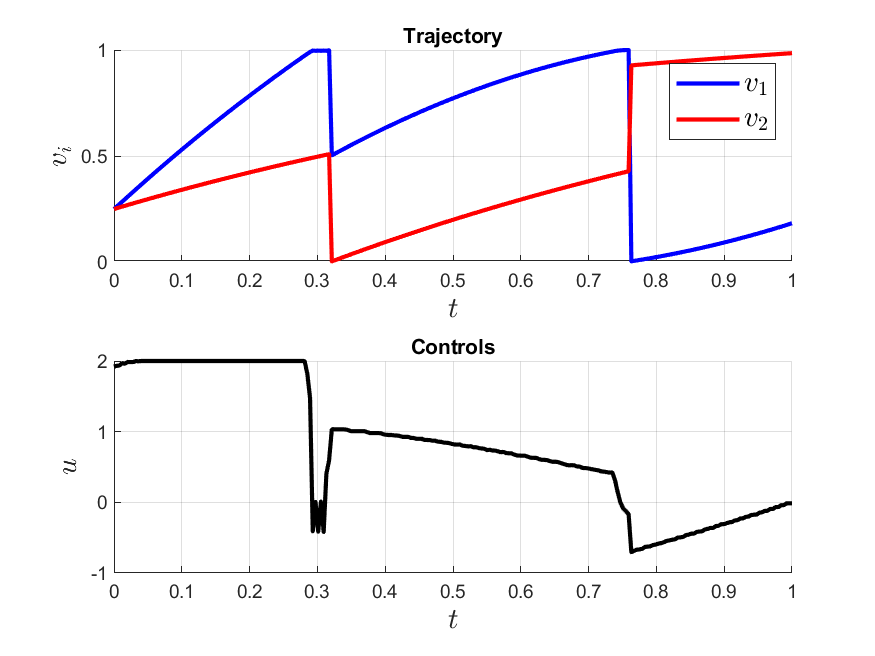}
		\caption{A plot of the trajectories starting at . The two impacts correspond to a beating one and a simple one.}
	\end{subfigure}
	\caption{A summary of the results of dynamic programming applied to the control problem }
	\label{fig:neuron_dp}
\end{figure}

The second method uses the HPMP (Theorem \ref{thm:HPMP}). Let $T$ be the fixed time horizon. Given the optimal Hamiltonian $H:T^*M \times \mathbb{R}\to \mathbb{R}$ and assuming the corner conditions \eqref{eq:corner_conditions} have a unique solution, with the reset $\Delta$ being invertible, we aim to solve the optimal control problem with fixed initial condition $v_0 = (v_1, v_2)_0 $. Since there is no terminal cost, the terminal conditions for the momenta are  $p_v = (p_v^1, p_v^2 ) = (0, 0)$. Assume that the optimality conditions \eqref{eq:minH} allow us to write optimal control strategy $u$ as a function of $v$ and $p_v$, i.e., $u = f(v, p_v)$ for some $f:T^*M \to \mathcal{U}$. Then the optimal solution $v^*, p_v^*$ solves the two point boundary value problem: 
\begin{gather}
	(\dot{v^*} , \dot{p_v^*}) = \left(\frac{\partial H}{\partial p_v}\Big|_{v^*, p_v^*}, - \frac{\partial H}{\partial v}\Big|_{v^*, p_v^*}\right),\quad  (v^*, p_v^*) \notin S \label{eq:cont}\\
	(v^*, p_v^*)^+ = \tilde{\Delta }\big((v^*, p_v^*)^-\big), \quad (v^*, p_v^*) \in S \label{eq:discrete}
\end{gather}
\begin{gather*}
	v^*(0)= v_0\\
	p_v^*(T) = 0
\end{gather*} 
with the optimal control being $f(v^*, p_v^*)$.
We will be looking for solutions that stay within a bounded set $v_{min}^i \leq v_i \leq v_{max}^i$ and $p_{min}^i \leq p_v^i \leq p_{max}^i$ for $i \in \{1, 2\}$. A numerical algorithm for solving this problem is given in Algorithm \ref{algo:numerics}. The strategy is to create a numerical representation of the surface $\mathcal{L}_1 = \{v_0\} \times [p_{min}^1, p_{min}^1] \times [p_{min}^2, p_{max}^2]$  and propagate it forward in time using the dynamics \eqref{eq:cont},\eqref{eq:discrete} to obtain $\varphi^\mathcal{H}_{T}(\mathcal{L}_1)$ and the associated costs at terminal time $T$, $J_T(p^1, p^2)$ for every point in $(p^1, p^2)\in\mathcal{L}_1$. At the terminal time, we compare the cost of the points of $\varphi^\mathcal{H}_T(\mathcal{L}_1)$ that have momenta within an $\epsilon$ precision of 0, and select the ones that have the lowest cost. In the numerical implementation, \texttt{matlab}'s \texttt{ode45} function with event detection was used to solve for the dynamics. The code for the numerical experiments can be found at \url{git@github.com:Maria-A-Oprea/Geometric_hybrid_optimal_control.git}.
\begin{algorithm}
	\caption{Algorithm for solving the hybrid optimal control problem in the case of invertible extended resets. }\label{algo:numerics}
	\begin{algorithmic}
		\State Given $\epsilon$ and number of grid points $N$
		\State Create equidistant Mesh of $N \times N$ points in the rectangle $[p^1_{min}, p^1_{max} ]\times [p^2_{min}, p^2_{max}]$
		\For{$(p^1, p^2)\in$ Mesh}
		\State Compute $p^1_T, p^2_T, J_T(p^1, p^2)$
		\If{ $(p^1_T)^2 +(p^2_T)^2 < \epsilon$ }
		\State Save $(p^1, p^2, J_T(p^1, p^2))$ in \texttt{possible solutions}
		\EndIf
		\EndFor
		\State $p_v^*(0) = $arg$\min_{p^1, p^2 \in \texttt{possible solutions}} J_T(p^1, p^2)$
		\State Compute $v^*(t), p_v^*(t)$ and optimal strategy $u(t) = f(v^*(t), p_v^*(t))$ with initial conditions $v_0, p_v^*(0)$\\
		\Return $v^*(t), p_v^*(t), u(t), J_T(p_v^*(0)) $
	\end{algorithmic}
\end{algorithm}

For our specific case,  the extended Hamiltonian with $p_0 = 1$ is 
\begin{equation*}
	\hat{H}(v_1,v_2,u;p_1,p_2) = \frac{1}{2}u^2 - 2\left(v_1-v_2\right)^2 + p_1\left(-v_1 + I_0 + u\right) + p_2\left(-v_2+I_0\right),
\end{equation*}
and the optimality conditions become:
$$\frac{\partial \hat{H}}{\partial u} = 0 \implies u^* = - p_1$$
Plugging this back into the extended Hamiltonian, we obtain the optimal Hamiltonian:
$$  H^*(v_1, v_2, p_1, p_2) = - \frac{1}{2}p_1^2 -2(v_1 - v_2)^2 + p_2(-v_2 + I_0) + p_1(-v_1 + I_0)$$

Hence, the equations of motion for the continuous part of the extended system are:
\begin{equation}\label{eq:eom_hpmp_neurons}
	\begin{cases}
		\dot{v}_1 = - p_1 - v_1 + I_0 \\
		\dot{v}_2 = -v_2 + I_0 \\
		\dot{p}_1 = p_1 + 4(v_1 -v_2)\\
		\dot{p}_2 = p_2 + 4(v_2 - v_1)
	\end{cases}
\end{equation}

There are four distinct corner conditions, determined by which of the four regions—$\Sigma^1_0$, $\Sigma^1_1$, $\Sigma^2_0$, or $\Sigma^2_1$—the trajectory intersects. 
\begin{align*}
	&\Sigma_0^1:\begin{cases}
		p_1^- = p_1^+ - \varepsilon, \\
		p_2^- = p_2^+,
	\end{cases} \hspace{ 2cm}
	\Sigma_0^2:\begin{cases}
		p_1^- = p_1^+, \\
		p_2^- = p_2^+ - \varepsilon,
	\end{cases}\\
	&\Sigma_1^1:\begin{cases}
		p_1^- = \varepsilon\\
		p_2^- = 0,
	\end{cases} \hspace{3cm}
	\Sigma_1^2:\begin{cases}
		p_1^- = 0\\
		p_2^- = \varepsilon,
	\end{cases}
\end{align*}
where $\varepsilon$ is chosen for each case such that the Hamiltonian is conserved, i.e., $H^*(v_1^-, v_2^-, p_1^-, p_2^-) = H^*(v_1^+ , v_2^+, p_1^+, p_2^+)$. Solving for $\varepsilon$ results in a linear equation for $\Sigma_2^0$, $\Sigma_2^1$ and a quadratic equation for $\Sigma_0^1$ and $\Sigma_1^1$.

Note that on the beating sets, the corner conditions are derived using Equation \eqref{eq:beating_boundary}. Since $\Delta|_{\Sigma_1^1}$ is a constant, $\Delta_*|_{\Sigma_1^1} = 0$, and the corner conditions become $[p_1^+ \ p_2^+] \begin{bmatrix}
	0 & 0\\0 & 0
\end{bmatrix} = [p_1^- \ p_2^-] + [\epsilon \ 0]$ such that $H(\eta w_{12}, \eta, p_1^+, p_2^+) = H(\eta, v_2^-, \varepsilon, 0)$. An analogous argument is used to derive the corner conditions on $\Sigma_1^2$. The condition that $p_1 = 0$ on $\Sigma_1^2 = [0.5, 1]\times \{1\}$ is also verified by the numerical computation of the optimal control $u^* = -p_1$ using dynamic programming (see Figure \ref{fig:neuron_dp}b)

The costate equations can be solved backward. Since there is no terminal cost $g = 0$,  the terminal conditions for the costate are $p_1(T) = p_2(T) = 0$. Thus, finding the optimal trajectory from initial condition $v_0$ amounts to solving the two point boundary value problem given by \eqref{eq:eom_hpmp_neurons} with impact surface $S = \{(v_1, v_2, p_1, p_2)| (v_1, v_2) \in \Sigma^1 \cup \Sigma^2\}$, reset conditions given by \eqref{eq:corrected_neuron_reset} and the corner conditions above, and boundary conditions $(v_1(0) , v_2(0)) = v_0$, $p_1(T) = p_2(T) = 0$. Figure \ref{fig:edp_pmp_comparison_neuron} shows a comparison of the optimal trajectory coming from dynamic programming, and the solution to the hybrid PMP obtained with a shooting method. 

Figure \ref{fig:neuron_manifolds} depicts the terminal manifold $\tilde{\mathcal{L}} = \{(v_1, v_2, 0, 0)| v_1, v_2 \in [0, 1]\}$ propagated backward until the first impact happens, denoted by $ \varphi^\mathcal{H}_{-t} (\tilde{\mathcal{L}}):=\tilde{\mathcal{L}}_{-t}$, and the initial manifold $T_{(0, 0)}^*[0, 1]^2$ propagated forward until the first impact happens, denoted by $\varphi^\mathcal{H}_{t} T_{(0, 0)}^*[0, 1]^2:= \mathcal{L}_t$. 
The impact region is divided into three subregions: $I = \pi_{v_1, p_1} (\pi^*)^{-1}(\Sigma_1^1)$, $II = \pi_{v_1, p_1} \pi^{-1}(\Sigma_2^0)$, and $III = \pi_{v_1, p_1} \pi^{-1}(\Sigma_2^1)$, where $\pi_{v_1, p_1}(v_1, v_2, p_1, p_2) = (v_1, p_1)$ and $\pi:T^*[0, \eta]^2 \to [0, \eta]^2$ is the cotangent bundle projection. In other words, we are showing the corresponding $p_1$ fibers, at locations where an impact happens either in $v_1$ or $v_2$. Figure \ref{fig:neuron_manifolds} shows the image of each region $I, II$, and $III$ under the reset map. The corner conditions give us the value of the jump in $p_1$ between the intersection of the regions $I, II, II$ with $\mathcal{L}_t$ (blue) and the intersection of the corresponding points in regions $\Delta(I), \Delta(II), \Delta(III)$ with $ \tilde{\mathcal{L}}_{-t}$ (green). 

\begin{figure}[H]
	\centering
	\begin{subfigure}{0.45\textwidth}
		\includegraphics[width=\textwidth]{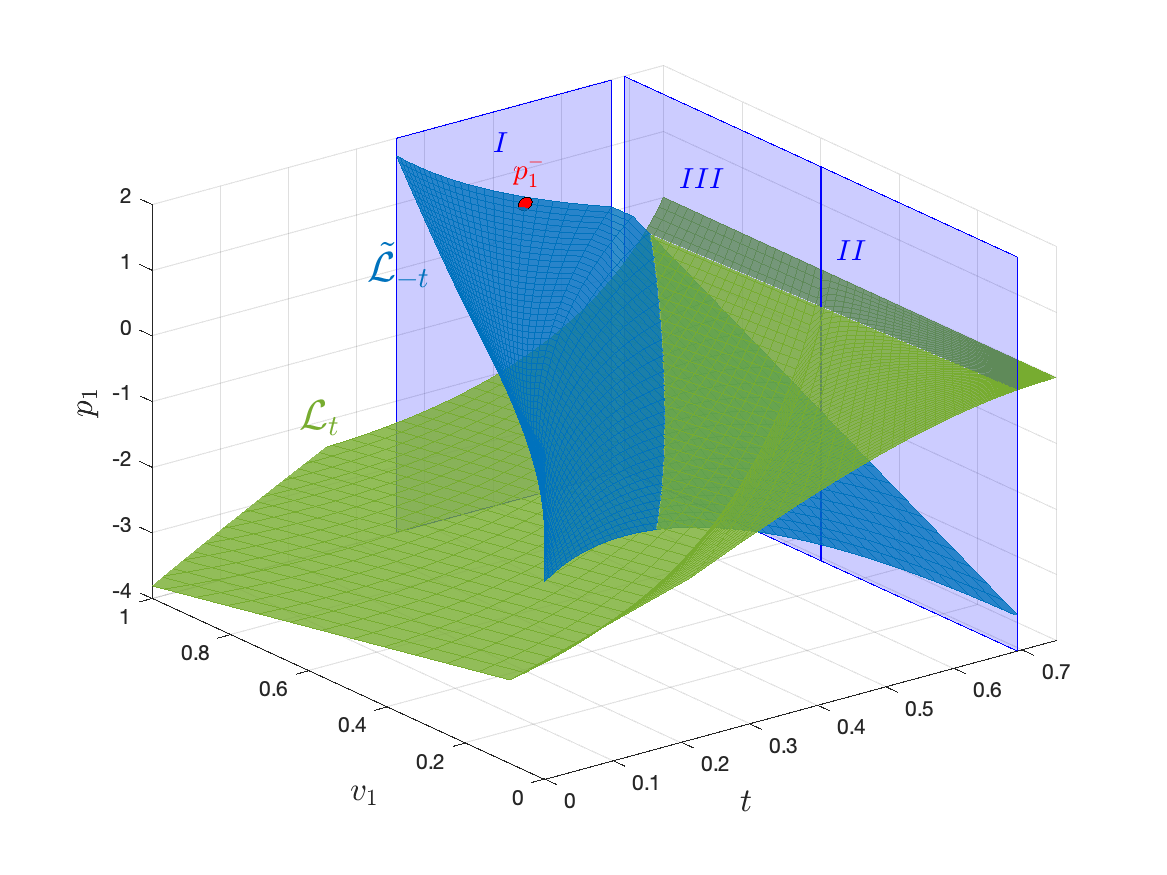}
		\caption{The impact surface is depicted in blue. }
	\end{subfigure}
	\hfill
	\begin{subfigure}{0.45\textwidth}
		\includegraphics[width=\textwidth]{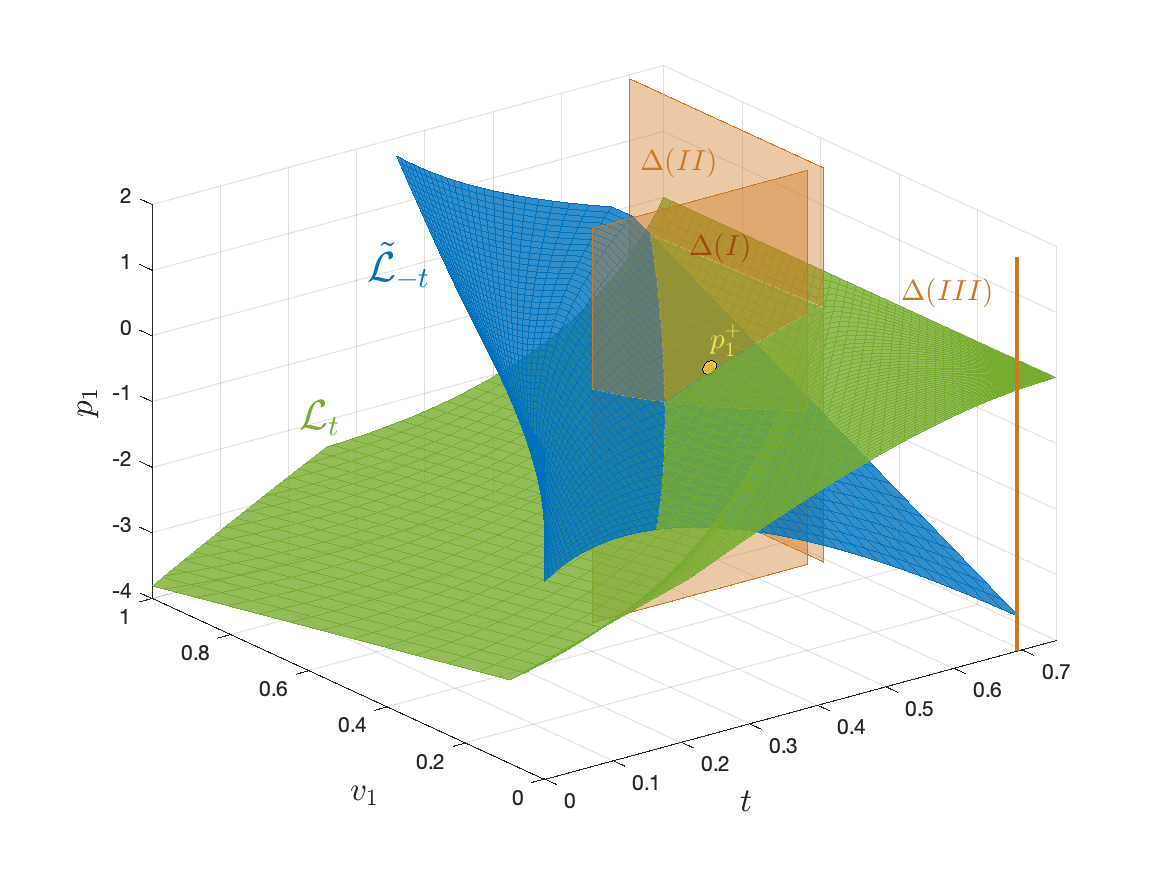}
		\caption{The image of the impact surface is depicted in brown.}
	\end{subfigure}
	\hfill
	\caption{A figure of the Lagrangian submanifolds $\varphi_{t}^\mathcal{H}(T^*_{(0, 0)} M)$ and $\varphi_{-t'}^\mathcal{H}(\Lambda_T)$ for times $t$ smaller than the first forward impact time and $t'$ larger than the first backward impact time. The solution to the corner condition at $p_1^+$ represented by the yellow point in (b) is the red point $p_1^-$ in (a). }
	\label{fig:neuron_manifolds}
\end{figure}

\begin{figure}[H]
	\centering
	\includegraphics[width=0.5\linewidth]{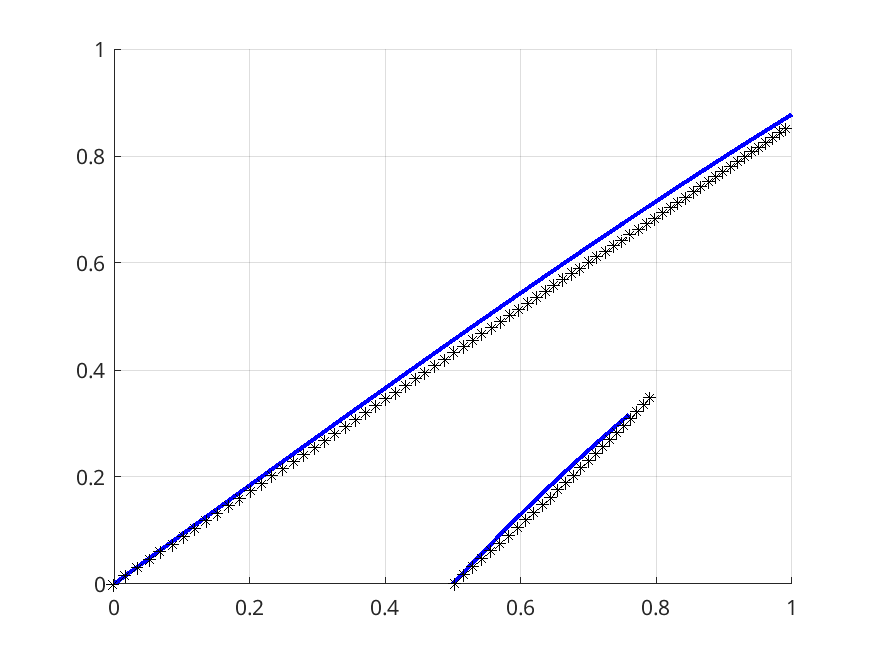}
	\caption{Comparison of the trajectories obtained with dynamic programming (black) and the hybrid maximum principle (blue).}
	\label{fig:edp_pmp_comparison_neuron}
\end{figure}

\section{Conclusions}
\label{sec:conclusions}

This work studied the geometric interpretation of optimal control problems in the context of deterministic hybrid dynamical systems. For regular optimal arcs, the co-states jump according to the ``Hamiltonian jump condition'' \eqref{eq:controlled_impact}, which preserves the canonical symplectic structure. As the symplectic structure is preserved, the notion of Lagrangian submanifolds, caustics, and conjugate points can be interpreted in the hybrid setting. 
For irregular optimal arcs, the maximum principle remains applicable to beating arcs but fails for Zeno. Although Zeno can be ruled out almost everywhere for ``reasonable'' systems (compact, smooth, regular, and boundary identity property), optimal trajectories may still be Zeno which is outside the scope of this current work. We point out two areas of future work.
\subsection{Hybrid conjugate points}
The maximum principle only presents necessary conditions for a local extremum. Sufficient conditions can be achieved by the exclusion of conjugate points along the arc, cf. Theorem 21.3 in \cite{ascontrol}. This observation has less utility in the hybrid domain as shown in Figure \ref{fig:bball_fold}; the Lagrangian manifold can be in multiple pieces as a result of resets, which can result in multiple points lying in $T^*_{x_0}M\cap\varphi_{-t_f}^\mathcal{H}(\mathcal{L})$, even when the caustic trajectory is empty. A potential plan to study these solutions is by examining the geometry of the intersection \eqref{eq:Lagrangian_intersection}.

Another question is to see how the notion of caustics in Section \ref{subsec:caustics} matches with caustics in the billiard community, \cite{gutkin}.

\subsection{Numerical techniques for irregular arcs}
Numerically implementing the boundary value problem in Theorem \ref{thm:HPMP} for regular arcs is difficult as, e.g., the shooting map is highly discontinuous \cite{optZeno}. This difficulty is compounded for irregular arcs, either beating or Zeno. At the onset of the problem, neither the number of resets nor their location is known. If beating may occur, it is additionally unknown in which beating set the resets happens. This adds difficulty as each case is a separate boundary value problem \eqref{eq:beating_boundary}.  
Moreover, (especially for beating) the state trajectory is only well-defined going forward while the co-state trajectory is only well-defined going backward. It would be desirable to be able to construct an algorithm that can determine whether or not beating transpires apriori and be able to decouple the forward and backward dynamics. Another approach could be to adapt pseudo spectral methods \cite{pseudo_spectral_book} to these problems.

Zeno arcs are difficult due to their singular nature, e.g., Example \ref{ex:bouncing_ball_zeno}. A contributing factor to their difficulty is that extension of solutions presented in \cite{Ames06isthere} are, in a sense, ``weak'' solutions to the HDS. A possible approach is to find a suitable notion of a hybrid viscosity solution to the resulting HJP problem where the singularity is blurred away.

\section*{Author contributions}
\textbf{William Clark}: Conceptualization, Formal Analysis, Writing - original draft, Writing - review \& editing, Supervision;
\textbf{Maria Oprea}: Methodology, Investigation, Software, Writing - review \& editing.

\section*{Use of AI tools declaration}
The authors declare they have not used Artificial Intelligence (AI) tools in the creation of this article.

\section*{Acknowledgments}

This work was funded by the NSF grant DMS-1645643 and AFOSR Award No. MURI FA9550-32-1-0400.

\section*{Conflict of interest}

The authors declare no conﬂict of interest.

\color{black}

\end{document}